\newtheorem{theorem}{Theorem}[section]
\newtheorem{observation}[theorem]{Observation}
\newtheorem{conjecture}[theorem]{Conjecture}
\newtheorem{example}{Example}[section]
\newtheorem{lemma}[theorem]{Lemma}
\theoremstyle{definition}
\newtheorem{definition}{Definition}[section]
\newcommand{\G}{\mathcal{G}}
\DeclareMathOperator{\td}{\bf{td}}
\DeclareMathOperator{\obs}{\bf{obs}}
\newcommand{\ind}{_\sqsubseteq}
\newcommand{\sub}{_\subseteq}
\newcommand{\minor}{_\leq}
\title{Uniqueness and minimal obstructions for tree-depth}
\author{Michael D. Barrus\thanks{Department of Mathematics, University of Rhode Island, Kingston, Rhode Island 02881, United States; email: \texttt{barrus@uri.edu}} ~and
John Sinkovic\thanks{Department of Mathematics and Statistics, Georgia State University, Atlanta, Georgia 30302, United States; email: \texttt{johnsinkovic@gmail.com}}}
\begin{document}

\maketitle

\begin{abstract}
A $k$-ranking of a graph $G$ is a labeling of the vertices of $G$ with values from $\{1,\dots,k\}$ such that any path joining two vertices with the same label contains a vertex having a higher label. The tree-depth of $G$ is the smallest value of $k$ for which a $k$-ranking of $G$ exists. The graph $G$ is $k$-critical if it has tree-depth $k$ and every proper minor of $G$ has smaller tree-depth.

We establish partial results in support of two conjectures about the order and maximum degree of $k$-critical graphs. As part of these results, we define a graph $G$ to be \emph{1-unique} if for every vertex $v$ in $G$, there exists an optimal ranking of $G$ in which $v$ is the unique vertex with label 1. We show that several classes of $k$-critical graphs are $1$-unique, and we conjecture that the property holds for all $k$-critical graphs. Generalizing a previously known construction for trees, we exhibit an inductive construction that uses $1$-unique $k$-critical graphs to generate large classes of critical graphs having a given tree-depth.

\medskip
\emph{Keywords:} Graph minors, tree-depth, vertex ranking
\end{abstract}


\section{Introduction}


The \emph{tree-depth} of a graph $G$, denoted $\td(G)$, is defined as the smallest natural number $k$ such that the vertices of $G$ may be labeled with elements of $\{1,\dots,k\}$ such that every path joining two vertices with the same label contains a vertex having a larger label. The name of this parameter refers to its equivalence with the minimum height of a rooted forest $F$ with the same vertex set of $G$ for which each edge of $G$ either belongs to $F$ or joins vertices having an ancestor--descendant relationship in $F$~\cite[Definition 6.1]{NesetrilOssonadeMendez12}. Tree-depth has also been referred to as the ordered chromatic number~\cite{BarNoyEtAl12,path} or vertex ranking number~\cite{BodlaenderEtAl98,ChangEtAl10,IyerEtAl88}. (See~\cite{NesetrilOssonadeMendez12,NesetrilOssonadeMendez06,NesetrilOssonadeMendez08} and the references cited above for further results and references.)

While much is known about the computational complexity of determining the tree-depth of a graph~\cite{NesetrilOssonadeMendez12,Pothen88}, from a structural standpoint we wish to understand what ``causes'' a given graph to have a particular tree-depth. In particular, since $\td(G)$ is defined as a minimum, what obstructions prevent $G$ from having a smaller tree-depth? One answer lies in the minors of $G$; as noted in~\cite[Lemma 6.2]{NesetrilOssonadeMendez12}, $\td(G) \geq \td(H)$ whenever $H$ is a minor of $G$. Define a graph $M$ to be \emph{critical} if every proper minor of $M$ has tree-depth less than $\td(M)$. If $\td(G)=k$ for a particular $k$, then we may attribute this to the fact that $G$ contains a critical minor with tree-depth $k$, and $G$ contains no critical minor with tree-depth $k+1$.

(Note that in other places in the literature, ``critical'' has sometimes been used to describe graphs for which every proper \emph{subgraph} has a smaller tree-depth; here we refer to these graphs as \emph{subgraph-critical}. If a graph has the property that every proper induced subgraph has a smaller tree-depth, then the graph will be called \emph{induced-subgraph-critical}. Since the minor relation encompasses more than the subgraph relation, it will be more natural here to refer to minors when using the unqualified term ``critical.'')

Curiosity about the critical graphs has begun to generate both structural results and questions. Notably, in~\cite{DGT} (see also~\cite{GT09}), Dvo\u{r}\'{a}k, Giannopoulou, and Thilikos defined $\mathcal{G}_k$ to be the class of graphs having tree-depth at most $k$, and $\obs\minor(\G_k)$ to be the set of minimal graphs under the minor-containment order having tree-depth greater than $k$ (in our terminology, $\obs\minor(\G_k)$ consists of all critical graphs with tree-depth $k+1$). Among other things, the paper~\cite{DGT} presented the elements of $\obs\minor(\G_k)$ for $k \in \{1,2,3\}$ (see Figure~\ref{fig:minorminimallist}). The authors also gave a constructive result. %
\begin{figure}[htb]
       \center{\includegraphics{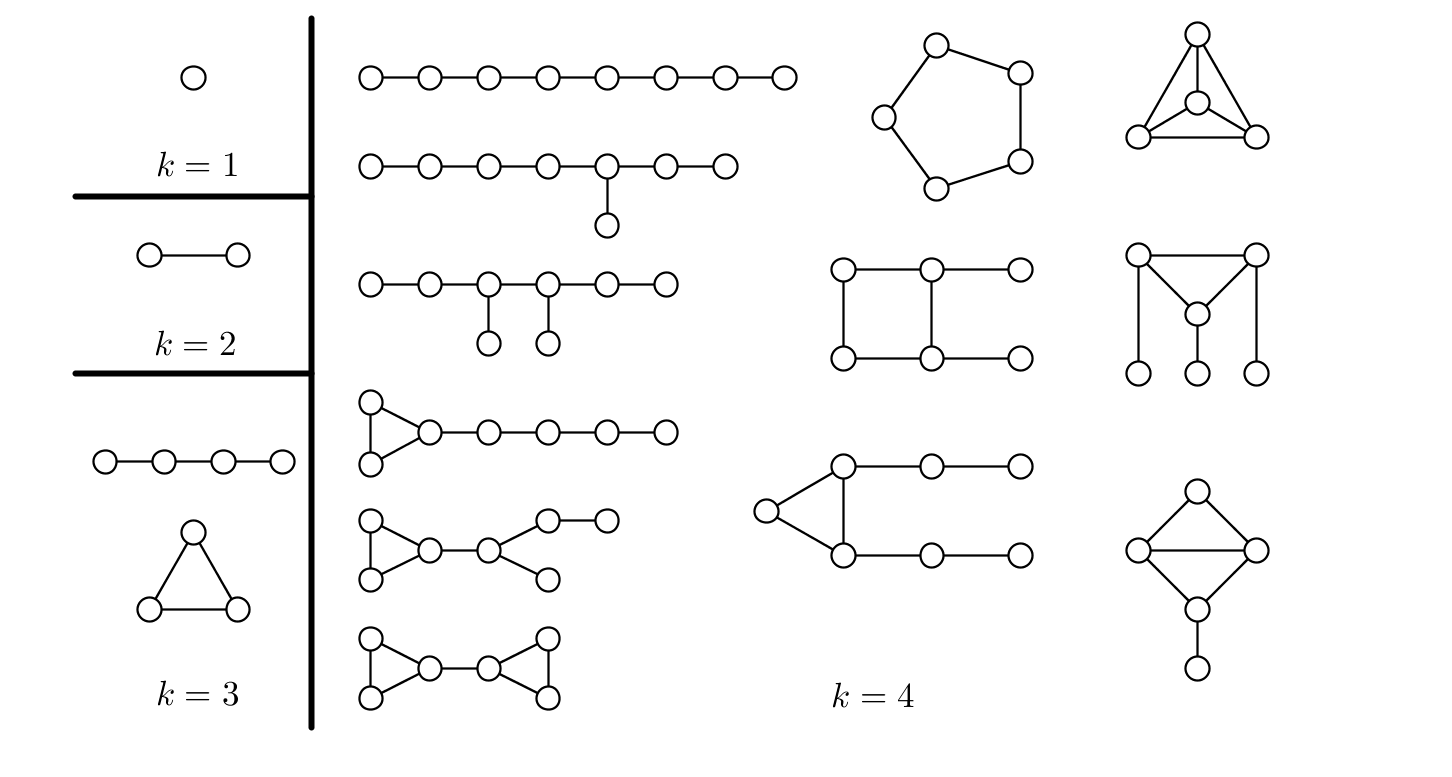}}
       \caption{\label{fig:minorminimallist} $k$-critical graphs for $k \in \{1,2,3,4\}$}
\end{figure} %
\begin{theorem}[\cite{DGT}]\label{thm: DGT edge}
	Given vertex disjoint graphs $G$ and $H$ in $\obs\minor(\G_{k})$, if a graph $J$ is formed by adding to the disjoint union $G+H$ an edge having one endpoint in $G$ and the other in $H$, then $J$ belongs to $\obs\minor(\G_{k+1})$.
\end{theorem}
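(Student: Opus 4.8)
The plan is to establish the two defining properties of $\obs\minor(\G_{k+1})$ separately: that $\td(J)=k+2$, and that every proper minor of $J$ has tree-depth at most $k+1$. Throughout I would lean on two elementary facts. First, assigning a vertex $w$ the unique top label and ranking $\Gamma-w$ below it yields a valid ranking, so $\td(\Gamma)\le 1+\td(\Gamma-w)$ for every vertex $w$. Second, the tree-depth of a disjoint union is the maximum of the tree-depths of the pieces. I would also use repeatedly that $G$ and $H$ are critical, so each of their proper minors has tree-depth at most $k$, and that (being critical) $G$, $H$, and hence $J$ are connected. Write the new edge as $xy$ with $x\in V(G)$ and $y\in V(H)$; note that $x$ and $y$ are cut vertices of $J$ whose deletion severs the two sides.

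For the lower bound I would argue by contradiction: given a ranking of $J$ using at most $k+1$ labels, connectivity forces the largest label used to occur at a single vertex $w$. Taking $w\in V(G)$ without loss of generality, every vertex of $H$ then receives a label at most $k$, and the restriction of the ranking to the induced subgraph $H$ is still a valid ranking; this gives $\td(H)\le k$, contradicting $\td(H)=k+1$. Hence $\td(J)\ge k+2$. For the matching upper bound I would delete $x$: since $J-x=(G-x)\sqcup H$ and $G-x$ is a proper minor of $G$, the disjoint union has tree-depth $\max\{\td(G-x),\td(H)\}=k+1$, so $\td(J)\le 1+(k+1)=k+2$.

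The substantive part is criticality. A proper minor of $J$ is a minor of one of the single-step reductions $J-xy$, $J-v$, $J-e$, or $J/e$, so by minor-monotonicity it suffices to bound the tree-depth of each of these by $k+1$. Deleting the bridge leaves $G\sqcup H$, of tree-depth $k+1$, and deleting $x$ (or $y$) leaves $(G-x)\sqcup H$, again of tree-depth $k+1$. For the remaining reductions the trick is to delete the bridge endpoint on the side opposite to where the operation occurred: if the operation modifies $G$ — that is, $J-v$ with $v\ne x$, $J-e$ with $e\in E(G)$, or $J/e$ with $e\in E(G)$ — I would bound $\td$ by $1+\td((\cdot)-y)$, since deleting $y$ disconnects the modified copy $G'$ of $G$ from $H-y$. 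Both $G'$ and $H-y$ are proper minors of $G$ and $H$ respectively, hence each has tree-depth at most $k$, so the reduction has tree-depth at most $1+k=k+1$. Contracting the bridge $xy$ identifies $x$ and $y$ into one vertex $z$, and deleting $z$ leaves $(G-x)\sqcup(H-y)$ of tree-depth $k$, again giving $k+1$; operations inside $H$ are handled symmetrically by deleting $x$.

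The one place that requires care — and the main obstacle I would flag — is exactly this choice of which cut vertex to delete. The naive move of deleting the \emph{same}-side endpoint fails: for $J-v$ with $v\in V(G)\setminus\{x\}$, deleting $x$ yields $(G-v-x)\sqcup H$, whose tree-depth is $\td(H)=k+1$ and so gives only the useless bound $\td(J-v)\le k+2$. Deleting the opposite endpoint $y$ is what lowers \emph{both} sides to tree-depth at most $k$ simultaneously and makes every case close at $k+1$. Once this is seen, the argument needs nothing beyond the two elementary facts above together with the criticality of $G$ and $H$; in particular it never uses the exact value of the tree-depth of the reduced pieces, only the upper bound $k$ coming from criticality.
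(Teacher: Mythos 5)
Your proof is correct and complete. Note that the paper itself gives no proof of Theorem~\ref{thm: DGT edge} --- it is quoted from~\cite{DGT} --- so there is no internal proof to compare against; your argument is the standard one for this result: the lower bound $\td(J)\geq k+2$ via the unique top-labelled vertex in a connected feasible ranking (criticality does guarantee $G$, $H$, and hence $J$ are connected, since a disconnected minimal graph would have a proper minor, namely a component, of the same tree-depth), and criticality of $J$ via reducing every proper minor to a single-step reduction and then deleting the bridge endpoint \emph{opposite} the modified side, so that both remaining pieces are proper minors of $G$ and $H$ of tree-depth at most $k$, giving $1+k=k+1$. Your case analysis ($J-xy$, $J-x$, $J-y$, and the opposite-endpoint trick for $J-v$, $J-e$, $J/e$, plus $J/xy$ handled by deleting the merged vertex) is exhaustive, and the obstacle you flag --- that deleting the same-side endpoint only yields the useless bound $k+2$ --- is indeed the crux of the argument.
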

It is easy to see that the sizes of the classes $\obs\minor(\G_k)$ mushroom as $k$ increases; the paper~\cite{DGT} uses Theorem~\ref{thm: DGT edge} to give a lower bound on the size of $\obs\minor(\G_4)$ by determining the number of trees in this set.

Closer examination of Figure~\ref{fig:minorminimallist} suggests structural properties that may possibly hold for all critical graphs. In this paper we address two particular conjectures along these lines. The first, which appears in~\cite{DGT}, deals with the orders of critical graphs; in its original form the conjecture is extended to all induced-subgraph-critical graphs.

\begin{conjecture} \label{conj: critical graph orders}
	Every critical graph with tree-depth $k$ has at most $2^{k-1}$ vertices.
\end{conjecture}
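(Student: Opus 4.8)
The plan is to argue by induction on $k$, exploiting the recursive structure of tree-depth together with the criticality hypotheses. First I would record two reductions. A critical graph must be connected: if $G$ had components $G_1,\dots,G_p$ with $\td(G)=\max_i\td(G_i)=\td(G_1)$, then $G_1$ would be a proper minor of the same tree-depth, contradicting criticality. Second, for every vertex $v$ we have $\td(G-v)=k-1$, since the inequality $\td(G)\le\td(G-v)+1$ (rank $G-v$ optimally and give $v$ a fresh top label) gives $\td(G-v)\ge k-1$, while criticality gives $\td(G-v)\le k-1$. Thus every vertex deletion drops the tree-depth by exactly one, and the analogous statements hold for edge deletions and contractions.

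With these in hand, I would use the standard recursion for connected graphs, $\td(G)=1+\min_{v}\td(G-v)$. Choosing a vertex $r$ attaining the minimum (equivalently, the unique top-labelled vertex of an optimal ranking), the components $C_1,\dots,C_m$ of $G-r$ each satisfy $\td(C_i)\le k-1$, with equality for at least one $i$, and $|V(G)|=1+\sum_i|V(C_i)|$. The goal is then to show $\sum_i|V(C_i)|\le 2^{k-1}-1$. The mechanism I expect to drive the factor of two is the edge-join of Theorem~\ref{thm: DGT edge}: gluing two $(k-1)$-critical graphs of order $2^{k-2}$ by a single edge produces a $k$-critical graph of order exactly $2^{k-1}$, so the bound is tight and the extremal graphs are built by pairing. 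Accordingly I would try to show that the components of $G-r$ organise into essentially two halves, each embeddable into a $(k-1)$-critical graph, using the \emph{$1$-uniqueness} property—that every vertex may serve as the unique label-$1$ vertex of some optimal ranking—to relocate the minimum and thereby control which vertices can coexist at the bottom of the elimination tree. For the restricted case of trees this program can be carried out directly: one shows that a critical tree is exactly an edge-join of two $(k-1)$-critical trees, whence equality $|V(G)|=2^{k-1}$ follows by induction.

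The main obstacle is that the components $C_i$ need not themselves be critical, so the inductive bound $|V(C_i)|\le 2^{\td(C_i)-1}$ cannot be applied to them: for instance, in the critical path $P_8$ (tree-depth $4$) the top vertex leaves a $P_4$ of tree-depth $3$ together with a $P_3$ of tree-depth $2$ whose three vertices already exceed $2^{2-1}=2$. A further difficulty is that the number $m$ of components is a priori unbounded, so even controlling each component individually does not immediately bound the sum. Overcoming these points requires replacing each component by a genuinely critical minor while simultaneously limiting how many components can attach to $r$ without forcing a larger critical minor—which is precisely where the conjectured $1$-uniqueness of critical graphs, together with the edge- and contraction-criticality conditions, would be brought to bear. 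Because $1$-uniqueness itself is presently known only for several subclasses, I would expect to obtain Conjecture~\ref{conj: critical graph orders} unconditionally only for those families (trees and the other $1$-unique classes), leaving the general bound open.
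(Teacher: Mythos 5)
You were asked to prove Conjecture~\ref{conj: critical graph orders}, and the first thing to say is that the paper itself contains no proof of it: it is stated as a conjecture and remains open, supported only by partial results. Your proposal is therefore right to stop short of claiming a proof, and the obstacles you identify are genuine. Your preliminary reductions are all sound (critical graphs are connected; $\td(G-v)=k-1$ for every vertex, and similarly for edge deletion and contraction), your use of the recursion $\td(G)=1+\min_v \td(G-v)$ is correct, and your $P_8$ computation pinpoints exactly why the naive induction fails: the components of $G-r$ need not be critical, so no inductive order bound applies to them, and their number is unbounded. Nothing in your sketch overcomes these two difficulties, and nothing in the paper does either; that is precisely why the statement is a conjecture.

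Comparing partial results, your instinct about the extremal mechanism is aligned with the paper, but your program runs in the opposite direction from what the paper actually does. You attempt a top-down decomposition of an arbitrary critical graph at its top-ranked vertex; the paper instead works bottom-up: Theorem~\ref{thm:specialconstruction} shows that adjoining $1$-unique $(r+1)$-critical graphs $L_i$ at every vertex of a $1$-unique $s$-critical graph $H$ produces a $1$-unique $(r+s)$-critical graph, and the order bound then propagates by the multiplicative estimate $|V(G)|\leq |V(H)|\max_i|V(L_i)|\leq 2^{s-1}2^{r}=2^{r+s-1}$. Your tight example (an edge-join of two $(k-1)$-critical graphs of order $2^{k-2}$) is exactly the special case $H=K_2$ of this construction, i.e.\ Theorem~\ref{thm: DGT edge}. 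One correction to your closing expectation: $1$-uniqueness is \emph{not} known to imply the order bound even for $1$-unique critical graphs. In the paper, $1$-uniqueness yields the degree bound of Conjecture~\ref{conj: max degree} and the criticality of the construction, but Conjecture~\ref{conj: critical graph orders} is verified only for graphs in the inductively generated families $\mathcal{S}_k$ and for specific seed families (such as the graphs $Q$ and $R_{k,t}$ of Section~\ref{sec:families}) checked directly; ``the $1$-unique classes'' is not by itself a class for which the order bound follows. Your claim for trees (critical trees are precisely edge-joins of two $(k-1)$-critical trees, forcing order exactly $2^{k-1}$) is consistent with~\cite{DGT}, but it is a structure theorem you would still need to prove; it does not appear in this paper.
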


The second conjecture, which does not seem to have appeared yet in the literature, concerns vertex degrees.

\begin{conjecture} \label{conj: max degree}
	Every critical graph with tree-depth $k$ has maximum degree at most $k-1$.
\end{conjecture}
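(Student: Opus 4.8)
The plan is to reduce the maximum-degree conjecture to a statement about where a high-degree vertex can be placed in an optimal ranking, and in particular to link it to the $1$-uniqueness property described in the abstract. The starting point is a purely local observation: if $\phi$ is any valid $k$-ranking of a graph and $v$ is a vertex with $\phi(v)=1$, then the neighbors of $v$ receive pairwise distinct labels, each at least $2$. Indeed, if $u_1$ and $u_2$ are neighbors of $v$ with $\phi(u_1)=\phi(u_2)$, the path $u_1vu_2$ joins two equally labeled vertices, yet its only interior vertex is $v$, whose label $1$ is not larger; this violates the ranking condition. Likewise no neighbor of $v$ can be labeled $1$, since the single edge from $v$ to such a neighbor would join two $1$'s with no interior vertex. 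Hence the neighbors of $v$ inject into $\{2,\dots,k\}$, so $\deg(v)\le k-1$.

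With this in hand the argument runs as follows. Let $G$ be a critical graph with $\td(G)=k$ and let $v$ be a vertex of maximum degree. It suffices to exhibit a \emph{single} optimal ranking of $G$ in which $v$ carries the label $1$; the observation above then forces $\deg(v)\le k-1$, which is exactly the claim. In particular, if $G$ is $1$-unique, so that every vertex — including $v$ — is the unique label-$1$ vertex of some optimal ranking, the conclusion is immediate. Thus the whole conjecture follows from the conjecture that every critical graph is $1$-unique, and the partial results establishing $1$-uniqueness for specific families of critical graphs translate directly into partial results here.

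The reason one cannot simply quote an arbitrary optimal ranking is instructive and locates the difficulty. If $v$ sits at a label $\ell>1$, then only its neighbors with labels \emph{above} $\ell$ are forced to be distinct, by the same path argument, and there are at most $k-\ell$ of these; neighbors with labels below $\ell$ may repeat freely, so the degree is not controlled. The entire content is therefore to push $v$ to the bottom of some optimal ranking. The main obstacle is precisely establishing this for the maximum-degree vertex of an arbitrary critical graph. I would attack it using criticality — by definition every proper minor has smaller tree-depth, so $\td(G-w)<k$, $\td(G-e)<k$, and $\td(G/e)<k$ for all vertices $w$ and edges $e$ — together with an attempt to surger an optimal ranking of $G$, or of $G-v$, so as to relocate $v$ to label $1$ without raising the height, ideally by induction on $k$. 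Making such a relocation work in full generality is the open part; carrying it out for structured families (trees, and the other classes where $1$-uniqueness is verified) is what the partial results accomplish.
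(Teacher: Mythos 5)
Your proposal matches the paper's own treatment exactly: the statement is left as a conjecture there too, and the paper's sole supporting argument is your same local observation that the neighbors of a rank-1 vertex in a feasible ranking must receive distinct ranks greater than $1$, so the degree bound holds for every $1$-unique graph and the full conjecture reduces to Conjecture~\ref{conj: 1-unique} (that all critical graphs are $1$-unique). You correctly identify the remaining step --- relocating a maximum-degree vertex to rank $1$ in some optimal ranking of an arbitrary critical graph --- as precisely the open content, which is what the paper's partial results on $1$-uniqueness address.
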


Conjecture~\ref{conj: max degree} is easily proved for a large class of critical graphs, and this class will be the focus of this paper. As we will see, the class may in fact include all critical graphs. We begin with some definitions.

Given a graph $G$, we will call a labeling of the vertices of $G$ with labels from $\{1,\dots,k\}$ a \emph{feasible} labeling if every path in $G$ between two vertices with the same label ($\ell$, say) passes through a vertex with a label greater than $\ell$. Adopting terminology from previous authors, we call a feasible labeling with labels from $\{1,\dots,k\}$ a \emph{($k$-)ranking of $G$}, and we refer to the labels as \emph{ranks} or \emph{colors} (note that every feasible labeling is a proper coloring of $G$). We call a ranking of $G$ \emph{optimal} if it is a $\td(G)$-ranking. A critical graph with tree-depth $k$ will be called $k$-critical.

\begin{definition}
A graph $G$ is \emph{$1$-unique} if for every vertex $v$ of $G$ there is an optimal ranking of $G$ in which vertex $v$ is the only vertex receiving rank 1.
\end{definition}

For example, the graph $P_8$ is 1-unique, as the rankings in Figure~\ref{fig:P8} and their reflections about the center of the path show.

\begin{figure}
\centering
\includegraphics[scale=.3]{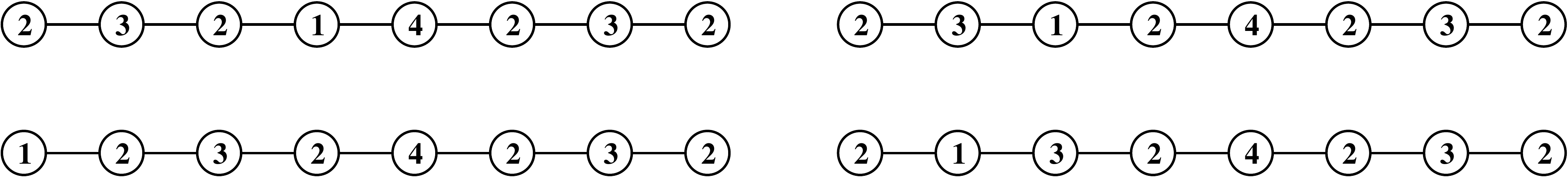}
\caption{Rankings of $P_8$ demonstrating $1$-uniqueness.}
\label{fig:P8}
\end{figure}

For 1-unique graphs, Conjecture~\ref{conj: max degree} is true. Indeed, note that in any feasible ranking of an arbitrary graph $G$, the neighbors of a vertex receiving rank 1 must all receive distinct ranks greater than 1; otherwise, some path joining vertices with the same rank would not contain an intermediate vertex with higher rank. This implies that in an optimal ranking any vertex with rank 1 has degree at most $\td(G)-1$. It immediately follows that $1$-unique graphs have maximum degree at most 1 less than their tree-depths.

The graph $P_8$ is both 4-critical and 1-unique; in fact, one can check that all the critical graphs with tree-depth at most $4$ (see Figure~\ref{fig:minorminimallist}) are 1-unique. It is natural to wonder whether there are critical graphs that are not 1-unique. We conjecture that none exist. 
\begin{conjecture} \label{conj: 1-unique}
	All critical graphs are 1-unique.
\end{conjecture}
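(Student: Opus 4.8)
The plan is to reformulate $1$-uniqueness in terms of elimination trees and then attempt an induction on the tree-depth $k$, using criticality to control the structure around the target vertex. Recall that a valid ranking $c$ of $G$ is the same thing as a rooted tree on $V(G)$ in which every edge of $G$ joins an ancestor to a descendant; under this correspondence the rank of a vertex equals $\td(G)$ minus its depth plus $1$ (taking the root to have depth $1$), so that the rank-$1$ vertices are exactly the leaves at maximum depth. Asking that $v$ be the unique rank-$1$ vertex in some optimal ranking therefore means asking for a height-$k$ elimination tree of $G$ in which $v$ is the unique deepest vertex. Deleting such a leaf $v$ leaves a height-$(k-1)$ elimination tree of $G-v$, and, since $v$ has no descendants, every edge at $v$ forces $N(v)$ onto the single root-to-parent path of $v$. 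Conversely I would build the ranking by (i) producing a height-$(k-1)$ elimination tree $T$ of $G-v$ in which $N(v)$ is contained in one root-to-leaf branch reaching depth $k-1$, and (ii) hanging $v$ as a new leaf at the bottom of that branch. A point worth isolating is that $T$ need \emph{not} respect the component decomposition of $G-v$: a single high root may legitimately bridge several components of $G-v$, exactly as happens for the cut vertices of $P_8$, and exploiting this freedom is what makes the reformulation workable. The first reduction to record is that $\td(G-v)=k-1$ for every vertex $v$ of a critical graph, since $G-v$ is a proper minor (giving $\td(G-v)\le k-1$) while adding $v$ back as a global root shows $\td(G)\le\td(G-v)+1$; thus a height-$(k-1)$ tree of $G-v$ always exists, and the whole difficulty lies in steering $N(v)$ onto one branch.

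For the inductive scaffold I would prove the statement by induction on $k$, handling the base cases $k\le 4$ with the explicit classifications of $\obs\minor(\G_{k})$ displayed in Figure~\ref{fig:minorminimallist}. For the inductive step I would first try to cash in Theorem~\ref{thm: DGT edge}: assuming every $k$-critical graph is $1$-unique, I would attempt to show that the edge-join $J = G+H+e$ of two $1$-unique graphs of $\obs\minor(\G_{k})$ is again $1$-unique, which would at least settle the conjecture for the whole family produced by that construction (and dovetails with the inductive construction announced in the abstract). Concretely, for a target vertex $v\in V(G)$ one combines a ranking of $G$ witnessing $1$-uniqueness at $v$ with a ranking of $H$, raising all ranks of $H$ together with the rank of the endpoint of $e$ lying in $G$ so that the new edge is dominated; the validity check reduces to the component characterization of rankings, namely that each rank $\ell$ occurs at most once in every connected component of the subgraph induced on the vertices of rank at most $\ell$.

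The heart of a genuinely general proof must be a lemma of the form: in a critical graph, every vertex admits an optimal ranking in which its neighbors receive distinct, linearly nested ranks, i.e. lie on one branch of an elimination tree. I would try to prove this by an exchange argument, starting from an arbitrary optimal ranking and repeatedly applying local relabelings that preserve optimality while either reducing the number of rank-$1$ vertices or migrating the surviving rank-$1$ vertex toward $v$, with a potential function to force termination. The step where I expect to get stuck — and the reason the statement remains conjectural — is ruling out a \emph{stuck} configuration: an optimal ranking from which $v$ cannot be promoted to the unique bottom without increasing the tree-depth. Excluding such configurations appears to demand criticality in its full strength, the fact that \emph{every} edge deletion, edge contraction, and vertex deletion lowers the tree-depth, rather than merely that some minor does. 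A promising way to force the issue is a minimal-counterexample argument: take a critical $G$ and a vertex $v$ admitting no suitable ranking, with $|V(G)|$ minimum, and seek a proper minor that is again critical and inherits the bad behaviour at the image of $v$, contradicting minimality. Arranging a minor reduction that simultaneously preserves criticality and transports the failure of $1$-uniqueness to a tracked vertex is, I believe, the central obstacle of the whole problem.
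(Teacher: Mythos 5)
You were asked to prove Conjecture~\ref{conj: 1-unique}, which the paper itself leaves open: there is no proof in the paper to compare against, only partial results (verification for all critical graphs with $\td \leq 4$ via Figure~\ref{fig:minorminimallist}; Theorem~\ref{thm:conjecture}, that $1$-uniqueness plus subgraph-criticality implies criticality; and Theorem~\ref{thm:specialconstruction}, that the Section~\ref{sec:families} construction produces $1$-unique critical graphs from $1$-unique critical pieces). Your proposal is, to its credit, explicit that it is not a proof: the exchange/minimal-counterexample step --- finding a proper minor that stays critical while transporting the failure of $1$-uniqueness to a tracked vertex --- is exactly the open content of the conjecture, and no argument in the paper supplies it either. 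So the verdict is that there is a genuine gap, but it is the unavoidable one: the statement is conjectural, and your sketch stalls precisely where it must.

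On the details of what you do have: the elimination-tree reformulation is sound once normalized (a vertex at depth $d$ in the tree built from a ranking has rank at most $k-d+1$, not equal to it, but if $v$ is the unique rank-$1$ vertex then every depth-$k$ vertex has rank $1$ and hence equals $v$; conversely labeling by $k-\mathrm{depth}+1$ works). The paper's Theorem~\ref{thm:starclique} is the closed form of your target lemma that ``$N_G(v)$ lies on one branch'': $v$ is $1$-unique if and only if the star-clique transform at $v$ strictly lowers tree-depth, obtained from Theorem~\ref{thm:newbound} with $T=V(G)\setminus\{v\}$; this is the tool your exchange argument is groping toward, and Theorem~\ref{thm:endpointsnotspecial} is the paper's closest substitute for your ``stuck configuration'' analysis (it rules out $1$-uniqueness rather than establishing it). Your edge-join step is fixable but mis-specified as written: raising the rank of the endpoint of $e$ lying in $G$ can invalidate the witness ranking of $G$. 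The correct move is to leave $G$'s ranking alone, invoke Observation~\ref{obs:forbidden} to rank $H$ with its endpoint $h$ of $e$ receiving the top color, and shift all of $H$'s colors up by one; then any path between two same-colored vertices on opposite sides of $e$ must pass through $h$, whose color dominates. This special case is subsumed by Theorem~\ref{thm:specialconstruction} (take the central graph there to be $K_2$), so your inductive scaffold reproves what the paper already proves constructively --- and, as the paper notes, such closure results cannot reach all critical graphs without the missing general lemma, which is why the conjecture remains open.
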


In this paper we study the property of 1-uniqueness and give partial results towards Conjectures~\ref{conj: critical graph orders} and~\ref{conj: 1-unique} (and hence~\ref{conj: max degree}, which we have seen follows from~\ref{conj: 1-unique}). In Section~\ref{sec: two} we introduce the more general notion of $t$-uniqueness of a graph and show that the $t$-unique graphs for different values of $t$ form nested families, with $1$-unique graphs forming the smallest such class. In Section 3 we establish partial results towards Conjecture~\ref{conj: 1-unique}, showing that the $1$-unique graphs with tree-depth $k$ satisfy many of the same minimality properties that $k$-critical graphs do. In Section 4 we generalize the construction in Theorem~\ref{thm: DGT edge} and show that all graphs inductively constructed in this way, beginning with graphs from families that contain all critical graphs in~\cite{DGT}, are 1-critical, which lends further support for Conjecture~\ref{conj: 1-unique}. We show that the graphs constructed all satisfy Conjecture~\ref{conj: critical graph orders} as well.

Before beginning, we define some terms and notation. Given a graph $G$, let $V(G)$ and $E(G)$ denote its vertex set and edge set, respectively. The \emph{order} of $G$ is given by $|V(G)|$. Given a vertex $v$ of $G$, let $N_G(v)$ denote the neighborhood of $v$ in $G$, and let $G-v$ denote the graph resulting from the deletion of $v$. Similarly, given a set $S \subseteq V(G)$, let $G-S$ denote the graph obtained by deleting all vertices in $S$ from $G$. For $e \in E(G)$, let $G-e$ denote the graph obtained by deleting edge $e$ from $G$. We indicate the disjoint union of graphs $G$ and $H$ by $G+H$, and we indicate a disjoint union of $k$ copies of $G$ by $kG$. We use $\langle p_1,\dots,p_k\rangle$ to denote a path from $p_1$ to $p_k$, with vertices listed in the order the path visits them; the \emph{length} of such a path is $k-1$, the number of its edges. We use $K_n$, $P_n$, and $C_n$ to denote the complete graph, path, and cycle with $n$ vertices, respectively.

\section{$t$-Unique Graphs} \label{sec: two}
As in \cite{DGT}, let $\mathcal{G}_k$ be the class of all graphs with tree-depth at most $k$.  We write $G\leq H$ to indicate that $G$ is a minor of $H$, and we use $\subseteq$ and $\sqsubseteq$, respectively, to denote the subgraph and induced subgraph relations.  For each $\mathsf{R}\in\{\sqsubseteq, \subseteq, \leq\}$, let $\obs_{\mathsf{R}}(\mathcal{G}_k)$ denote the set of graphs not in $\mathcal{G}_k$ that are minimal with respect to $\mathsf{R}$.  Note that $\obs_{\leq}(\mathcal{G}_k)\subseteq\obs_{\subseteq}(\mathcal{G}_k)\subseteq \obs_{\sqsubseteq}(\mathcal{G}_k)$, and that the elements of $\obs_\leq(\G_k)$, $\obs_\subseteq(\G_k)$, and $\obs_\sqsubseteq(\G_k)$, respectively, are precisely the critical, subgraph-critical, and induced-subgraph-critical graphs with tree-depth $k+1$.

We now generalize the definition of 1-uniqueness from the last section. Recall from \cite{DGT} the following observation:

\begin{observation}\label{obs:forbidden} If $G\in \obs_{\sqsubseteq}(\mathcal{G}_k)$ $(\text{or }   \obs_{\subseteq}(\mathcal{G}_k)  \text{ or }  \obs_{\leq}(\mathcal{G}_k))$  then for every $v\in V(G)$ there exists a $(k+1)$-ranking $\rho$ such that $\rho(v)=k+1$.
\end{observation}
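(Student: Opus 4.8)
The plan is to exploit criticality directly, building the desired ranking by hand rather than appealing to any deeper structural theory. Since $\obs\minor(\G_k)\subseteq\obs\sub(\G_k)\subseteq\obs\ind(\G_k)$, it suffices to treat the weakest hypothesis, namely $G\in\obs\ind(\G_k)$; that is, $\td(G)=k+1$ while every proper induced subgraph of $G$ has tree-depth at most $k$. Fix an arbitrary vertex $v\in V(G)$. The single structural fact I would invoke is that $G-v$ is a proper induced subgraph of $G$, so criticality yields $\td(G-v)\le k$ and hence a $k$-ranking $\sigma$ of $G-v$ using labels in $\{1,\dots,k\}$.

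I would then define $\rho$ on $V(G)$ by setting $\rho(u)=\sigma(u)$ for $u\neq v$ and $\rho(v)=k+1$, and verify that $\rho$ is a feasible $(k+1)$-ranking with $\rho(v)=k+1$, which is exactly what the statement requires. To check feasibility, consider any path $P$ in $G$ joining two vertices $x$ and $y$ with $\rho(x)=\rho(y)=\ell$. Because $v$ is the only vertex carrying label $k+1$, the endpoints $x,y$ must both differ from $v$, and so $\ell\le k$. If $P$ avoids $v$, then $P$ lies entirely in $G-v$, and feasibility of $\sigma$ supplies an interior vertex of $P$ whose label exceeds $\ell$. If instead $P$ passes through $v$, then since $x,y\neq v$ the vertex $v$ occurs in the interior of $P$ and has label $k+1>\ell$. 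In either case the ranking condition is met, so $\rho$ is feasible.

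There is no serious obstacle in this argument; the only point demanding care is confirming that the higher-labeled vertex witnessing feasibility is genuinely interior to $P$ rather than one of its endpoints. This is automatic here: the two endpoints share the common label $\ell\le k$, whereas the distinguished vertex $v$ carries the maximum label $k+1$, so $v$ can never be an endpoint of a monochromatic pair. Thus the extension $\rho$ does the job for every choice of $v$, and the observation follows.
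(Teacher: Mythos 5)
Your proof is correct and is exactly the standard argument behind this observation (which the paper itself only recalls from~\cite{DGT} without proof): delete $v$, use minimality to get a $k$-ranking of $G-v$, and extend by assigning $v$ the label $k+1$. Your reduction to the induced-subgraph case via the inclusions $\obs\minor(\G_k)\subseteq\obs\sub(\G_k)\subseteq\obs\ind(\G_k)$ is sound, as the paper notes these inclusions explicitly, and your feasibility check is complete.
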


In a connected graph, only one vertex receives the highest label in an optimal ranking. For some optimal rankings (as in Figure~\ref{fig:P8}) other values may appear on just one vertex.

\begin{definition}  A vertex $v$ of $G$ is \emph{$t$-unique} if there exists an optimal ranking of $G$ where $v$ is the unique vertex with rank $t$.  The graph $G$ is \emph{$t$-unique} if each of its vertices is $t$-unique.
\end{definition}

The notion of a $t$-unique graph resembles that of a centered coloring. As explained in~\cite{NesetrilOssonadeMendez08}, a \emph{centered coloring} of a graph $G$ is a vertex coloring with the property that in every connected subgraph of $G$ some color appears exactly once. The minimum number of colors necessary for a centered coloring is then $\td(G)$, and an optimal ranking of $G$ is a centered coloring. Similarly, $t$-uniqueness deals with a color appearing once, though by our definition this color is the fixed color $t$, and the only subgraph of $G$ considered is $G$ itself. Furthermore, $t$-uniqueness is a property of a vertex or graph, rather than of a single coloring; for a graph $G$, $t$-uniqueness requires that multiple optimal colorings exist, placing the color $t$ at each vertex of $G$ in turn.

We now study $t$-uniqueness and how it relates to the classes $\obs_{\mathsf{R}}(\mathcal{G}_k)$ for $\mathsf{R} \in \{\sqsubseteq, \subseteq, \leq\}$.

\begin{lemma}\label{lem:inducedsubgraphsunique} Let $\td(G)=k+1$.  Then $G\in \obs\ind(\mathcal{G}_k)$ if and only if $G$ is $(k+1)$-unique.
\end{lemma}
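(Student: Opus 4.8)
The plan is to prove both implications directly from the definitions, using the principle that feasibility of a ranking is inherited when passing between $G$ and its vertex-deleted subgraphs. Throughout I write $k+1$ for $\td(G)$, so that an optimal ranking of $G$ is exactly a $(k+1)$-ranking.

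For the forward direction I would assume $G \in \obs\ind(\mathcal{G}_k)$ and fix an arbitrary vertex $v$. Criticality gives $\td(G-v) \leq k$, so I can choose a $k$-ranking $\sigma$ of $G-v$. I then define a labeling $\rho$ of $G$ by setting $\rho(v) = k+1$ and $\rho(u) = \sigma(u)$ for every $u \neq v$. The key step is to verify that $\rho$ is feasible: any two vertices of $G-v$ sharing a rank under $\rho$ are joined only by paths that either lie entirely in $G-v$, where $\sigma$ already supplies an intermediate vertex of higher rank, or else pass through $v$, whose rank $k+1$ exceeds every other rank and thus serves as the required higher label. Since $v$ is the only vertex labeled $k+1$, it is never paired with a same-rank vertex. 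Hence $\rho$ is a $(k+1)$-ranking witnessing that $v$ is $(k+1)$-unique, and as $v$ was arbitrary, $G$ is $(k+1)$-unique.

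For the reverse direction I would assume $G$ is $(k+1)$-unique and show that every proper induced subgraph of $G$ has tree-depth at most $k$. Because tree-depth is monotone under the induced subgraph relation and every proper induced subgraph is contained in $G-v$ for some vertex $v$, it suffices to bound $\td(G-v)$ for each $v$. Fixing $v$, the hypothesis provides an optimal ranking $\rho$ in which $v$ is the unique vertex of rank $k+1$. Restricting $\rho$ to $V(G) \setminus \{v\}$ yields a labeling of $G-v$ that uses only ranks in $\{1,\dots,k\}$, and since every path in $G-v$ is also a path in $G$, feasibility of $\rho$ is inherited by this restriction. Therefore $\td(G-v) \leq k$, and combined with $\td(G) = k+1 > k$ this places $G$ in $\obs\ind(\mathcal{G}_k)$.

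The argument is short, so rather than a genuine obstacle the point demanding the most care is the feasibility bookkeeping in each direction: confirming that inserting $v$ at the top rank cannot create a violating same-rank pair, and conversely that deleting the unique top vertex neither destroys feasibility nor introduces a new pair of equally labeled vertices. Both reductions rest on the correspondence between paths in $G$ and paths in $G-v$, but it is worth stating explicitly that the \emph{uniqueness} of the label $k+1$ is precisely what makes the top-rank extension feasible in the forward direction.
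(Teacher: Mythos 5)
Your proof is correct, and your reverse direction is exactly the paper's: restrict an optimal ranking in which $v$ is the unique vertex of rank $k+1$ to $G-v$, observe the restriction is a feasible labeling using only ranks in $\{1,\dots,k\}$, and conclude $\td(G-v)\leq k$ for every $v$, which together with $\td(G)=k+1$ gives minimality. Your forward direction takes a slightly different route. The paper cites Observation~\ref{obs:forbidden} (every vertex of a graph in $\obs\ind(\mathcal{G}_k)$ receives rank $k+1$ in some $(k+1)$-ranking) and upgrades existence to uniqueness via the fact that in any ranking a connected graph has a unique vertex of highest rank --- which tacitly uses that graphs in $\obs\ind(\mathcal{G}_k)$ are connected, a point the paper leaves unstated. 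You instead inline the standard construction behind that observation: take a $k$-ranking $\sigma$ of $G-v$ (available since criticality gives $\td(G-v)\leq k$) and extend by $\rho(v)=k+1$; feasibility holds because paths through $v$ meet the top label and paths avoiding $v$ are handled by $\sigma$, and uniqueness of the label $k+1$ is automatic since $\sigma$ uses only $\{1,\dots,k\}$. The two arguments carry the same content, but yours is self-contained and sidesteps the connectivity consideration entirely, which is a small gain in rigor over the paper's citation-based version.
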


\begin{proof} In any ranking, a connected graph has a unique vertex of highest rank. Thus Observation~\ref{obs:forbidden} is equivalent to the statement that graphs in $\obs_{\sqsubseteq}(\mathcal{G}_k)$ are $(k+1)$-unique.   If $G$ is $(k+1)$-unique, for any vertex $v$ in $G$ there is an optimal ranking $\rho$ for which $v$ is the unique vertex with rank $k+1$.  Since $\td(G)=k+1$, the labeling $\rho$ restricted to $G-v$ is a ranking using fewer than $k+1$ colors.
\end{proof}

The notion of $t$-uniqueness suggests a certain minimality in graphs with respect to tree-depth. As in the proof of Lemma~\ref{lem:inducedsubgraphsunique}, we may begin with an optimal ranking $\rho$ of $G$ that demonstrates the $t$-uniqueness of a vertex $v$ and restrict $\rho$ to $G-v$.  We derive an optimal ranking of $G-v$ with fewer colors by decreasing by 1 each rank of $\rho$ that is greater than $t$. Thus the $t$-unique vertex $v$ is the only impediment to a ranking of the graph using fewer colors. Lemma~\ref{lem:inducedsubgraphsunique} illustrates this type of minimality in graphs in $\obs\ind(\mathcal{G}_k)$, and we will observe a stronger form of it in many (possibly all) graphs in $\obs\minor(\mathcal{G}_k)$.

We now present some results on $t$-uniqueness in graphs.

\begin{lemma}\label{lem:uniquespectrum} If $G$ is $t$-unique, then $G$ is $s$-unique for all $s$ such that $t\leq s\leq \td(G)$.
\end{lemma}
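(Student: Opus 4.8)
The plan is to fix an arbitrary vertex $v$ and, starting from an optimal ranking that witnesses the $t$-uniqueness of $v$, produce an optimal ranking that witnesses its $s$-uniqueness; since $v$ is arbitrary and $G$ is $t$-unique, applying this to every vertex yields the $s$-uniqueness of $G$. Concretely, let $k=\td(G)$ and let $\rho$ be an optimal ranking in which $v$ is the only vertex with rank $t$. I would relabel by a single order rotation $f$ of the block $\{t,t+1,\dots,s\}$: send $t\mapsto s$, send each $j\in\{t+1,\dots,s\}$ to $j-1$, and fix every label outside the block. Write $\rho'=f\circ\rho$. Because $v$ is the unique vertex carrying label $t$, this map moves $v$ (and only $v$) up to label $s$ while the vertices previously labelled $s$ drop to $s-1$, so $v$ becomes the unique vertex of rank $s$ under $\rho'$. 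Since $f$ is a bijection of $\{1,\dots,k\}$ and $s\le\td(G)=k$, the labeling $\rho'$ still uses only labels in $\{1,\dots,k\}$; hence once it is shown to be a ranking it is automatically optimal.

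The substance of the argument---and the step I expect to be the main obstacle---is verifying that $\rho'$ really is a ranking, since $f$ is not monotone and non-monotone relabelings can destroy the defining path condition. I would argue as follows. Because $f$ is a bijection on labels, two vertices share a label under $\rho'$ exactly when they do under $\rho$, so it suffices to re-certify the path condition for each pair $x,y$ with $\rho(x)=\rho(y)=\ell$. The crucial structural point is that the only pairs of labels whose order $f$ reverses are those whose smaller element is $t$ (that label alone is promoted to the top of the block, while $f$ is order-preserving on $\{1,\dots,k\}\setminus\{t\}$). But no two vertices share the label $t$, so every relevant pair has common label $\ell\neq t$. Given such a pair and a vertex $z$ on an $x$--$y$ path with $\rho(z)>\ell$ (which exists since $\rho$ is a ranking), the pair $\{\ell,\rho(z)\}$ has smaller element $\ell\neq t$ and is therefore not reversed, so $f(\rho(z))>f(\ell)$; thus $z$ still certifies the pair at the new level. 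In particular, in the extreme subcase $\rho(z)=t$ one has $z=v$ and $\ell<t$, whence $f(\ell)=\ell\le t-1<s=f(t)$, consistent with this. Hence $\rho'$ satisfies the path condition for every pair and is a ranking.

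With $\rho'$ established as a ranking on $\{1,\dots,k\}$ in which $v$ is the unique vertex of rank $s$, optimality is immediate from $\td(G)=k$, so $v$ is $s$-unique; letting $v$ range over all vertices gives the $s$-uniqueness of $G$ for every $s$ with $t\le s\le\td(G)$. Two minor points remain to be handled cleanly in the writeup: the degenerate case $s=t$, where $f$ is the identity and nothing is to be proved, and the bookkeeping that the rotation keeps all labels within $\{1,\dots,k\}$, which holds precisely because $s\le\td(G)$. I note that an alternative to the one-shot rotation is to prove only the consecutive case $s=t+1$ by transposing the labels $t$ and $t+1$ and then to iterate; this confines the single order-reversal to a transposition but requires re-invoking the uniqueness of the promoted label at each step, so I would prefer the direct rotation above.
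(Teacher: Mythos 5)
Your proof is correct, but it is organized differently from the paper's. The paper proves only the consecutive case---if $v$ is $k$-unique with $k\leq \td(G)-1$, then $v$ is $(k+1)$-unique---by transposing the two labels $k$ and $k+1$ (all vertices labelled $k+1$ drop to $k$, and $v$ alone rises to $k+1$), and then verifies the path condition through a case analysis on the values $\rho'(x)$ and $\rho'(z)$; the full lemma follows by iterating this step, exactly the alternative you mention and set aside at the end. Your one-shot cyclic rotation of the block $\{t,\dots,s\}$ is the composition of those transpositions, but doing it in a single move lets you replace the paper's case analysis with one clean structural observation: the rotation reverses the order of a pair of labels only when the smaller label is $t$, and since $t$ is carried by the single vertex $v$, no pair of equally-labelled vertices has common label $t$, so every witness $z$ with $\rho(z)>\ell$ on an $x$--$y$ path survives relabelling. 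This buys a shorter, induction-free verification (and your checks of the boundary issues---$s=t$ degenerate, labels staying in $\{1,\dots,\td(G)\}$, uniqueness of $v$ at rank $s$ because $f^{-1}(s)=t$, and optimality being automatic---are all in order), while the paper's approach buys a smaller local perturbation at each step at the cost of re-invoking the uniqueness hypothesis along the induction.
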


\begin{proof} We show that if a vertex is $k$-unique for some $k\leq \td(G)-1$ then it is $(k+1)$-unique.  Let $\rho$ be an optimal ranking of $G$, and suppose $v$ is the unique vertex with color $k$.  Form $\rho'$ from $\rho$ by reassigning the color $k$ to all vertices $w$ such that $\rho(w)=k+1$ and reassigning $\rho'(v)=k+1$.   Let $x$ and $y$ be vertices of $G$ such that $\rho'(x)=\rho'(y)$; note that $\rho(x)=\rho(y)$ by our construction.  Then $\rho'(x)\neq k+1$ since $v$ is the unique vertex with label $k+1$.  In every $xy$-path there exists a vertex $z$ for which $\rho(z)>\rho(x)$.   If $\rho'(x)>k+1$ or if $\rho'(z)<k$, then \[\rho'(x)=\rho(x)<\rho(z)=\rho'(z).\] Assume now that $\rho'(x)\leq k+1$ and $\rho'(z)\geq k$. If $\rho'(z)\leq \rho'(x)$ then $k\leq \rho'(z)\leq \rho'(x)\leq k+1$. Since $\rho(z)\neq\rho(x)$, we have $\rho'(z)\neq \rho'(x)$ by construction, implying that $\rho'(x)=k+1$, a contradiction. Hence $\rho'(z)>\rho'(x)$ in every case and $\rho'$ is an optimal ranking of $G$ where $v$ is the unique vertex with color $k+1$.
\end{proof}

In light of Lemmas \ref{lem:inducedsubgraphsunique} and \ref{lem:uniquespectrum}, $G\in\obs\ind(\mathcal{G}_k)$ for some $k$ if and only if $G$ is $t$-unique for some $t$.   By Lemma \ref{lem:uniquespectrum} we can group the graphs in $\obs\ind(\mathcal{G}_k)$ by the minimum $t$ for which they are $t$-unique.  The $1$-unique graphs are of particular interest because they satisfy the most restrictive condition.  Let $\mathcal{U}_k$ be the set of all graphs with tree-depth $k$ that are $1$-unique.

Since $\mathcal{U}_{k+1}$ is a subset of $\obs\ind(\mathcal{G}_k)$, it is natural to ask whether it contains or is contained in either $\obs\sub(\mathcal{G}_k)$ or $\obs\minor(\mathcal{G}_k)$.   This is not the case:  Let $G_k$ be the graph obtained from $C_{2^k+1}$ by adding a chord between the neighbors $u,w$ of a vertex $v$.  We will show in the next section that $G_k$ is $1$-unique and that $\td(G_k)=\td(C_{2^k+1})=k+2. $ Since $G_k$ contains $C_{2^k+1}$ as a subgraph, $G_k\not\in \obs\sub(\mathcal{G}_{k+1})$.  We will also show that the graph $C_{2^k+2}$ is in $\obs_{\subseteq}(\mathcal{G}_{k+1})$, but that it is not $1$-unique.  Thus $\obs_{\subseteq}(\mathcal{G}_{k})$ and $\mathcal{U}_{k+1}$ are incomparable for all $k\geq 2$.  However, their intersection is of interest.

\begin{theorem}\label{thm:conjecture} If $G$ is $1$-unique and subgraph-critical with tree-depth $k+1$, then $G$ is $(k+1)$-critical; in symbols, \begin{equation}\label{eq:conjecture} \mathcal{U}_{k+1}\cap \obs\sub(\mathcal{G}_k)\subseteq \obs\minor(\mathcal{G}_k). \end{equation}
\end{theorem}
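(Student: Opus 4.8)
The plan is to exploit the minor-monotonicity of tree-depth to reduce the claim to a single edge contraction, and then to use $1$-uniqueness to rank the contracted graph with one fewer color. Since $\td(G)=k+1$, membership of $G$ in $\obs\minor(\mathcal{G}_k)$ amounts to showing that every proper minor of $G$ lies in $\mathcal{G}_k$, i.e.\ has tree-depth at most $k$. Because tree-depth is minor-monotone, it suffices to verify this for the graphs obtained from $G$ by a single elementary operation — deleting a vertex, deleting an edge, or contracting an edge — as every other proper minor is a minor of one of these and hence inherits tree-depth at most $k$. Deletion of a vertex or an edge is immediate from the hypothesis that $G$ is subgraph-critical, since $G-v$ and $G-e$ are proper subgraphs. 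Thus the entire content of the theorem is the edge-contraction case.

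Fix an edge $e=uv$ of $G$ and let $w$ denote the vertex of $G/e$ obtained by identifying $u$ and $v$. Using that $G$ is $1$-unique, I would select an optimal ranking $\rho$ of $G$ in which $u$ is the unique vertex of rank $1$, and then define a labeling $\rho'$ of $G/e$ by $\rho'(w)=\rho(v)$ and $\rho'(x)=\rho(x)$ for every other vertex $x$. Because $u$ was the only vertex of rank $1$ under $\rho$ and $\rho(v)\geq 2$, the labeling $\rho'$ omits the color $1$ entirely and uses at most the $k$ colors $\{2,\dots,k+1\}$. Hence, once $\rho'$ is shown to be feasible, relabeling yields a $k$-ranking of $G/e$ and $\td(G/e)\leq k$, as required.

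The hard part will be verifying that $\rho'$ is feasible, and this is exactly where $1$-uniqueness pays off. I would argue by contradiction: if $\rho'$ were infeasible, choose a shortest path $P'$ in $G/e$ between two vertices of equal color $\ell$ whose interior vertices all have color strictly less than $\ell$ (note $\ell\geq 2$, as the color $1$ is unused). I then lift $P'$ to a path $W$ in $G$ by replacing the contracted vertex $w$ with $u$, with $v$, or with the traversed edge $uv$, choosing a short route that reconnects the neighbors of $w$ on $P'$; if $w$ is an endpoint of $P'$, I lift it to $v$. The only vertices of $W$ not already present on $P'$ are then $u$, of color $1$, and possibly $v$, of color $\ell$. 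Since $\ell\geq 2$, the color-$1$ vertex $u$ can never serve as an intermediate vertex of higher color, so tracing $W$ exhibits two vertices of color $\ell$ — either the two endpoints, or an endpoint together with the interior copy of $v$ — joined by a subpath all of whose interior colors are less than $\ell$, contradicting the feasibility of $\rho$ on $G$.

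The main subtlety to handle carefully is the case analysis in the lifting step: I must check that in every configuration of the neighbors of $w$ on $P'$ (adjacent to $u$ only, to $v$ only, or to both), a valid route exists and $W$ remains a genuine path — here one uses that $w$ appears at most once on the path $P'$, so that $u$ and $v$ each occur at most once on $W$. Granting this, feasibility of $\rho'$ follows, establishing $\td(G/e)\leq k$, completing the contraction case, and hence proving the containment in~\eqref{eq:conjecture}.
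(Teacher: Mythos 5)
Your proof is correct, but it takes a genuinely different route from the paper's. The paper proves Theorem~\ref{thm:conjecture} in three lines as a corollary of its earlier machinery: by Theorem~\ref{thm:endpointsnotspecial} (whose contrapositive says that if an endpoint of $e$ is $1$-unique then $\td(G\cdot e)<\td(G)$), contracting any edge of a $1$-unique graph drops the tree-depth, while subgraph-criticality handles deletions; Theorem~\ref{thm:endpointsnotspecial} is itself deduced from the star-clique characterization of $1$-uniqueness (Theorem~\ref{thm:starclique}, via the observation that $G \cdot e$ is a subgraph of the star-clique transform at $u$), which in turn rests on the $G\langle S\rangle$ decomposition of Theorem~\ref{thm:newbound}. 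You instead re-prove the contraction step from scratch: take a ranking witnessing $1$-uniqueness of $u$, transfer it to $G\cdot e$ so that color $1$ is freed up, and verify feasibility by lifting a shortest bad path from the quotient back to $G$, where the only new interior vertices are $u$ (color $1<\ell$) and possibly $v$ (color $\rho'(w)$). This hands-on argument is self-contained and elementary — it needs no star-clique transforms or quotient graphs, and it produces an explicit $k$-ranking of $G\cdot e$ — whereas the paper's route is modular, reusing Theorem~\ref{thm:newbound}, which it needs anyway for the constructions of Section~\ref{sec:families}. Two small points of care in your write-up, neither a gap: when $w$ is an endpoint of the bad path and its neighbor $a$ attaches in $G$ only to $u$, the lift must be the route $\langle v,u,a,\dots\rangle$ rather than $v$ alone (your case analysis accommodates this); and the alternative you mention of ``an endpoint together with the interior copy of $v$'' of color $\ell$ never actually occurs, since on a shortest bad path an interior $w$ has $\rho'(w)<\ell$, and when $w$ is an endpoint your lift makes $v$ an endpoint — so that clause is harmless redundancy.
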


If Conjecture~\ref{conj: 1-unique} is true, then equality holds in~\eqref{eq:conjecture}.

The proof of Theorem \ref{thm:conjecture} requires several preliminary steps, and we postpone it until the next section. We close this section with a Venn diagram illustrating the relationships between the sets of this section in the following figure; the shaded region indicates the set $\obs\minor(\mathcal{G}_k)$, and the question mark indicates the region that Conjecture~\ref{conj: 1-unique} states is empty.
\begin{figure}[htb]
\centering
\includegraphics{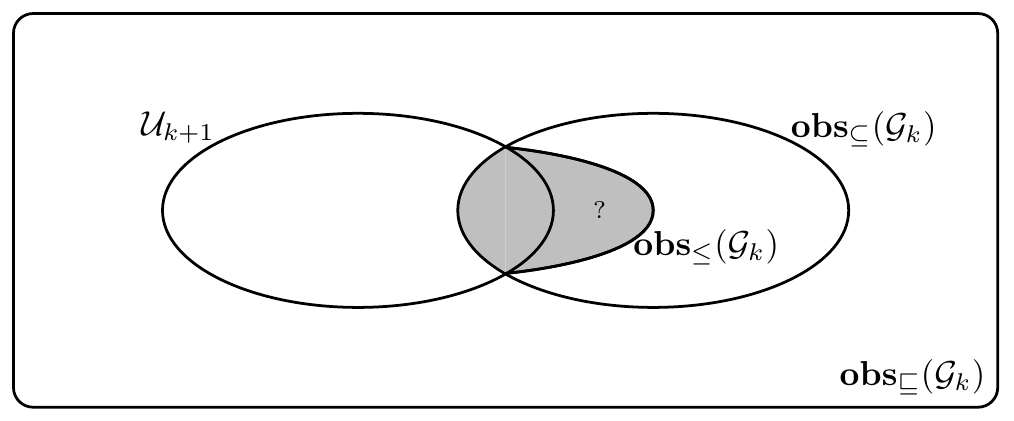}
\caption{Depiction of set intersections}
\label{fig:venndiagram}
\end{figure}

\section{Properties of $1$-unique graphs}

In this section we prove Theorem~\ref{thm:conjecture} and other claims from the previous section and describe further properties of $1$-unique graphs.  We begin with a characterization of $1$-unique vertices. Given a vertex $v$ in a graph $G$, a \emph{star-clique transform at $v$} removes $v$ from $G$ and adds edges between the vertices in $N_G(v)$ so as to make them a clique.

\begin{theorem}\label{thm:starclique} Let $v$ be a vertex of a graph $G$, and let $H$ be the graph obtained through the star-clique transform at $v$ of $G$.  Vertex $v$ is $1$-unique in $G$ if and only if $\td(H)<\td(G)$.
\end{theorem}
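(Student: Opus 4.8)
The plan is to prove both implications by transferring rankings between $G$ and $H$ through a shift of labels: at every vertex other than $v$ the label changes by $\pm 1$, while $v$ (which lives only in $G$) always plays the role of the unique vertex of rank $1$. The only edges of $H$ absent from $G-v$ are the clique edges inside $N_G(v)$, and the only vertex of $G$ missing from $H$ is $v$, so the entire argument hinges on matching a path that runs through $v$ in $G$ with the corresponding clique edge in $H$, and conversely.

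For the forward direction I would take an optimal ranking $\rho$ of $G$ in which $v$ is the unique vertex of rank $1$. I would first record that any two neighbors $x,y$ of $v$ get distinct ranks, since otherwise the path $\langle x,v,y\rangle$ would join equally labeled vertices while passing only through $v$, whose rank $1$ is not larger. I would then define $\rho'(u)=\rho(u)-1$ on $V(H)=V(G)\setminus\{v\}$, which uses labels in $\{1,\dots,\td(G)-1\}$ because every vertex but $v$ has rank at least $2$. To see that $\rho'$ ranks $H$, I would take a path $P$ in $H$ between $a,b$ with $\rho(a)=\rho(b)\ge 2$, replace each clique edge $xy$ it uses by the detour $\langle x,v,y\rangle$ to obtain a walk in $G$, pass to an embedded $a$--$b$ path, and invoke validity of $\rho$ to get an internal vertex $z$ with $\rho(z)>\rho(a)$. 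Since $\rho(a)\ge 2>1=\rho(v)$, the vertex $z$ cannot be $v$, hence lies on $P$ and certifies $\rho'(z)>\rho'(a)$. This gives $\td(H)\le\td(G)-1$.

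For the converse I would start from an optimal ranking $\sigma$ of $H$ and set $\tau(v)=1$ and $\tau(u)=\sigma(u)+1$ for $u\neq v$, so that $v$ is the unique vertex of rank $1$ and $\tau$ uses at most $\td(H)+1\le\td(G)$ labels. To verify that $\tau$ ranks $G$, I would take a path $P$ between $a,b$ with $\tau(a)=\tau(b)$; uniqueness of rank $1$ forces $a,b\neq v$ and $\sigma(a)=\sigma(b)$. If $P$ avoids $v$ it is already a path of $H$, and if $P$ meets $v$ along $\langle x,v,y\rangle$ I would collapse that detour to the clique edge $xy$ to obtain an $a$--$b$ path of $H$. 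Either way, validity of $\sigma$ supplies an internal vertex $z\neq v$ with $\sigma(z)>\sigma(a)$, which is internal in $P$ and satisfies $\tau(z)>\tau(a)$. Thus $\tau$ is a ranking with at most $\td(G)$ labels; by minimality of $\td(G)$ it is optimal, so it witnesses that $v$ is $1$-unique.

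The step I expect to be the main obstacle is the path/detour correspondence in the forward direction: lifting a path of $H$ through its clique edges yields only a walk in $G$, possibly revisiting $v$, so I must pass to an embedded path before applying the ranking property of $\rho$. What makes this harmless is precisely that $v$ carries the minimum rank $1$, which forces the high-rank certificate $z$ to avoid $v$ and therefore to survive as an internal vertex of the original path. The same feature drives the converse, where a one-vertex detour through $v$ collapses cleanly to a single clique edge, and I would take care in both directions to confirm that the certificate vertex is genuinely internal to the path under consideration.
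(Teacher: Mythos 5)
Your proof is correct, but it takes a genuinely more elementary route than the paper does. The paper never argues about this statement directly: it first proves the general decomposition result of Theorem~\ref{thm:newbound}, namely $\td(G)=\min_{S\subseteq V(G)}\bigl(\td(G\langle S\rangle)+\td(G-S)\bigr)$ together with the characterization that equality at a set $T$ holds exactly when some optimal ranking colors all of $T$ strictly above $V(G)-T$; Theorem~\ref{thm:starclique} then falls out in one line by taking $T=V(G)-\{v\}$, since $G\langle T\rangle=H$, $\td(G-T)=1$, and ``$v$ is the unique vertex of rank $1$'' is precisely the condition that $T$ sits above its complement. Your argument is in effect the specialization of the proof of Theorem~\ref{thm:newbound} to the case $|V(G)\setminus T|=1$: your shift of labels by $\pm 1$ is the paper's offset by $\td(G-T)$; your detour/collapse surgery between paths of $H$ and walks of $G$ through $v$ is the paper's lifting of paths of $G\langle T\rangle$ to walks of $G$ and back; and your observation that the high-rank certificate $z$ cannot equal $v$ (because $\rho(v)=1$ is the strict minimum) plays the role of the paper's use of the color ordering between $T$ and its complement. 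The delicate points you flagged are handled correctly: passing from the lifted walk to an embedded path is sound because every vertex of that path lies in $V(P)\cup\{v\}$ and $z\neq v$, your preliminary remark that neighbors of $v$ get distinct ranks disposes of the degenerate case in which $P$ is a single clique edge, and in the converse a path of $G$ meets $v$ at most once, so collapsing $\langle x,v,y\rangle$ to the clique edge $xy$ really yields a path of $H$. What your version buys is a short, self-contained proof of this one theorem; what the paper's version buys is reuse, since Theorem~\ref{thm:newbound} is invoked again with sets $S$ much larger than a single vertex in the proofs of Theorems~\ref{thm:s+r} and~\ref{thm:specialconstruction}, so proving the path-surgery argument once in full generality amortizes the work.
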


Theorem~\ref{thm:starclique} is a consequence of a more general result on tree-depth that we will prove first.

\begin{definition} \label{defn:G S}
Given a graph $G$ and a subset $S$ of its vertices, let $G\langle S \rangle$ denote the graph with vertex set $S$ in which vertices $u$ and $v$ are adjacent if they are adjacent in $G$ or if some component of $G-S$ has a vertex adjacent to $u$ and a vertex adjacent to $v$.
\end{definition}

\begin{theorem}\label{thm:newbound}
If $G$ is a graph, then \[\td(G) = \min_{S \subseteq V(G)} \left(\td(G\langle S \rangle) + \td(G-S)\right).\] Furthermore, $\td(G) = \td(G\langle T \rangle) + \td(G-T)$ if and only if there exists an optimal ranking of $G$ in which the vertices in $T$ receive higher colors than the vertices outside $T$.
\end{theorem}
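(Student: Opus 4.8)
The plan is to establish the two inequalities $\td(G)\le\min_S(\td(G\langle S\rangle)+\td(G-S))$ and $\td(G)\ge\min_S(\td(G\langle S\rangle)+\td(G-S))$ separately, and then to notice that the construction proving the first and the threshold argument proving the second are exactly the two directions of the equality characterization. The technical heart of both halves is a correspondence between paths of $G$ and paths of $G\langle S\rangle$. On the one hand, along any path of $G$ the vertices that happen to lie in $S$ form, listed in order, a walk in $G\langle S\rangle$: between two consecutive $S$-vertices $a,b$ the intermediate stretch is either a single edge of $G$ or lies entirely in one component of $G-S$ adjacent to both $a$ and $b$, so $a$ and $b$ are adjacent in $G\langle S\rangle$ by Definition~\ref{defn:G S}. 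On the other hand, each edge of $G\langle S\rangle$ can be realized by a path of $G$ whose interior vertices all lie outside $S$.

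For the upper bound I would fix $S$, choose an optimal ranking $\sigma$ of $G\langle S\rangle$ and an optimal ranking $\tau$ of $G-S$, and label $G$ by giving each vertex of $G-S$ its $\tau$-color and each vertex of $S$ its $\sigma$-color raised by $\td(G-S)$, so that $S$ occupies the top colors, $G-S$ the bottom colors, and the total count is $\td(G\langle S\rangle)+\td(G-S)$. To verify feasibility, take equally labeled vertices $x,y$; since $S$-labels exceed $\td(G-S)$ and non-$S$-labels do not, $x$ and $y$ lie on the same side of $S$. If both lie outside $S$, an $xy$-path either stays inside $G-S$, where $\tau$ supplies a higher internal label, or meets $S$, whose labels are all higher. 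If both lie in $S$, I project the $xy$-path onto its $S$-vertices to obtain an $xy$-walk in $G\langle S\rangle$, reduce to a path, and invoke feasibility of $\sigma$ to find an internal $S$-vertex of higher $\sigma$-color; this vertex sits on the original path and retains its higher label after the uniform shift. This yields $\td(G)\le\td(G\langle S\rangle)+\td(G-S)$ for every $S$.

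For the lower bound I would start from an optimal ranking $\rho$ of $G$ with $k=\td(G)$ colors, fix a threshold $j$, and set $S=\{v:\rho(v)>j\}$. The restriction of $\rho$ to $G-S$ is feasible and uses only colors $\le j$, so $\td(G-S)\le j$. The crux is that $\rho$ restricted to $S$ is feasible on $G\langle S\rangle$: given equally labeled $u,v\in S$ and a $uv$-path in $G\langle S\rangle$, I realize each of its edges by a genuine $G$-path with interior outside $S$, concatenate, and reduce to a $G$-path $P$; feasibility of $\rho$ gives a vertex $w$ on $P$ with $\rho(w)>\rho(u)>j$, and since every interior vertex produced by the edge-realizations has color $\le j$, the vertex $w$ must be one of the original $S$-vertices, hence an interior vertex of the $G\langle S\rangle$-path carrying the required higher color. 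Granting this, $\rho|_S$ uses the $k-j$ colors above $j$, so $\td(G\langle S\rangle)\le k-j$ and the two terms sum to at most $k$. I expect this feasibility step to be the main obstacle, since it is precisely where the edge set of $G\langle S\rangle$ and the fact that the top colors of $\rho$ stay above the threshold must be made to interact.

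Combining the two inequalities gives the minimum formula. For the equality characterization, the ``only if'' direction is the upper-bound construction applied to $S=T$: when $\td(G\langle T\rangle)+\td(G-T)=\td(G)$, that combined labeling is optimal and places $T$ on the top colors, so it is an optimal ranking in which $T$ dominates. The ``if'' direction is the threshold argument: given an optimal ranking in which $T$ receives higher colors than its complement, take $j$ to be the largest color outside $T$, so that $T=\{v:\rho(v)>j\}$ is exactly the threshold set; the lower-bound argument then yields $\td(G\langle T\rangle)+\td(G-T)\le\td(G)$, which together with the general upper bound forces equality.
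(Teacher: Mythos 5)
Your proposal is correct and follows essentially the same route as the paper: the identical construction (shift the $G\langle S\rangle$-ranking above the $G-S$-ranking) for the upper bound, and the same path-projection/edge-realization correspondence between $G$ and $G\langle S\rangle$ for the converse. The only cosmetic difference is at the end of the ``if'' direction, where you close via color-counting ($\td(G-T)\le j$ and $\td(G\langle T\rangle)\le k-j$, summed against the general upper bound) instead of the paper's argument that the restrictions of the optimal ranking may be assumed optimal --- a slightly tidier piece of bookkeeping, but the same proof.
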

\begin{proof} %
We show first that for any $S \subseteq V(G)$, we can obtain a ranking of $G$ with $\td(G\langle S\rangle)+\td(G-S)$ colors in which the vertices in $S$ receive strictly higher colors than the vertices outside $S$. Let $\alpha$ and $\beta$ be optimal rankings of $G-S$ and $G\langle S \rangle$, respectively. Construct a ranking $\rho$ of $G$ by defining $\rho(v)=\alpha(v)$ for all $v \in V(G) - S$ and $\rho(w) = \beta(w) + \td(G-S)$ for all $w \in S$.

We claim that $\rho$ is a ranking of $G$. Suppose vertices $x$ and $y$ in $G$ receive the same color $c$, and consider a path $P$ having $x$ and $y$ as its endpoints. Suppose first that $c \leq \td(G-S)$. If the path $P$ includes a vertex of $S$, then $P$ contains a vertex with color greater than $c$, as desired. Otherwise, the path $P$ is contained in $G-S$, and by construction $P$ contains a vertex colored with a value greater than $c$.

If instead $c > \td(G-S)$, then $x$ and $y$ both belong to $S$. Suppose $u,w_1, \dots, w_\ell, v$ is a list of consecutive vertices of $P$ with the property that $u,v \in S$ and $w_1, \dots, w_\ell \notin S$. Note that $uv$ is an edge of $G \langle S \rangle$, so we may form a path $P'$ in $G \langle S \rangle$ simply by removing vertices not in $S$ from the ordered list of vertices of $P$. Since $\beta(x)=\beta(y)$ and $P'$ joins $x$ and $y$ in $G\langle S \rangle$, some vertex of $P'$ receives a higher color than $\beta(x)$ in the ranking $\beta$, and by construction this vertex is a vertex of $P$ receiving a higher color than $c$, as desired.

Thus $\td(G) \leq \min_{S \subseteq V(G)} \left(\td(G\langle S \rangle) + \td(G-S)\right)$, and if $\td(G\langle T\rangle)+\td(G-T)=\td(G)$ for some subset $T$ of $V(G)$, then the coloring described above is an optimal ranking of $G$ in which the vertices of $T$ receive higher colors than those outside $T$.

We now show that if $T \subseteq V(G)$ and there exists an optimal ranking of $G$ in which the vertices in $T$ receive higher colors than the vertices outside $T$, then $\td(G) = \td(G\langle T \rangle) + \td(G-T)$; this demonstrates equality in the inequality from the previous paragraph. Suppose $\tau$ is a $\td(G)$-ranking of $G$ in which $\tau(v) > \tau(w)$ whenever $v \in T$ and $w \notin T$. Let $\beta$ denote the restriction of $\tau$ to $V(G)-T$; since $\tau$ is an optimal coloring, we may assume that $\beta$ is an optimal ranking of $G-T$. Now define a labeling $\alpha$ of the vertices of $T$ by letting $\alpha(v) = \tau(v)-\td(G-T)$. We claim that $\alpha$ is a ranking of $G\langle T \rangle$. Clearly $\alpha(v) \geq 1$ for all $v \in T$. Suppose now that there exist distinct vertices $x$ and $y$ in $T$ such that $\alpha(x)=\alpha(y)$, and let $P$ be a path joining $x$ and $y$ in $G \langle T \rangle$.

For any two adjacent vertices $u$ and $v$ in $G\langle T \rangle$, either $uv$ is an edge of $G$, or there exists a path $\langle w_1,\dots,w_\ell \rangle$ in $G-T$ such that $uw_1$ and $w_{\ell} v$ are edges in $G$. Modify $P$ to obtain a walk $W$ in $G$ by inserting such a path between each pair $u,v$ of consecutive vertices of $P$ that are nonadjacent in $G$. The walk $W$ contains a path $P'$ between $x$ and $y$ in $G$; by assumption, $P'$ contains a vertex $z$ such that $\tau(z)>\tau(x)$. Since $\tau$ assigns larger colors to vertices in $T$ than to vertices not in $T$,  we have $z \in T$. This forces $z$ to be a vertex of $P$, and it follows that $\alpha$ is a ranking of $G \langle T \rangle$. If $\alpha$ were not an optimal ranking of $G\langle S \rangle$, then replacing it with an optimal ranking would lead to a ranking of $G$ using fewer colors than $\tau$ does, a contradiction. Thus $\tau$ uses $\td(G\langle T \rangle)$ and $\td(G-T)$ distinct colors on $T$ and $V(G)-T$, respectively, and thus $\td(G) = \td(G\langle T \rangle) + \td(G-T)$.
\end{proof}

\textit{Proof of Theorem~\ref{thm:starclique}}. Let $H$ be the graph obtained from $G$ by performing a star-clique transform at vertex $v$. The claim follows immediately by letting $T=V(G)-\{v\}$ and noting that $H=G\langle T \rangle$.\hfill $\Box$

Given an edge $e$ of $G$, let $G \cdot e$ denote the graph obtained from $G$ when edge $e$ is contracted.

\begin{theorem} \label{thm:endpointsnotspecial} Let $e\in E(G)$.  If $\td(G\cdot e)= \td(G)$, then the endpoints of $e$ are not $1$-unique.
\end{theorem}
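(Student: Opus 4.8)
The plan is to prove the contrapositive: assuming that one endpoint of $e = uv$, say $u$, is $1$-unique, I will show that $\td(G \cdot e) < \td(G)$, contradicting the hypothesis. Since $u$ and $v$ play symmetric roles in the edge $e$, the same argument applied to $v$ will then show that neither endpoint is $1$-unique.

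The engine of the argument is Theorem~\ref{thm:starclique}. Let $H$ be the graph obtained from $G$ by the star-clique transform at $u$; since $u$ is assumed $1$-unique, that theorem gives $\td(H) < \td(G)$. It therefore suffices to relate $G \cdot e$ to $H$. I claim that $G \cdot e$ is isomorphic to a spanning subgraph of $H$, under the identification of the contracted vertex of $G \cdot e$ with the vertex $v$ of $H$. Granting this, the monotonicity of tree-depth under the subgraph relation yields $\td(G \cdot e) \le \td(H) < \td(G)$, the desired contradiction.

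To establish the subgraph containment, I would compare edge sets directly. Both $G \cdot e$ and $H$ have vertex set $V(G) \setminus \{u\}$ once the contracted vertex is identified with $v$ (recall $v \in N_G(u)$ because $uv \in E(G)$). Edges among the vertices of $V(G) \setminus \{u,v\}$ are inherited unchanged from $G$ in both graphs, so these agree. For the remaining edges, a vertex $x \in V(G) \setminus \{u,v\}$ is adjacent to the contracted vertex in $G \cdot e$ exactly when $x \in N_G(u) \cup N_G(v)$; I would check that in $H$ the vertex $v$ is adjacent to every such $x$: if $x \in N_G(v)$ then the edge $vx$ survives the transform, while if $x \in N_G(u)$ then $x$ and $v$ both lie in $N_G(u)$ and so are joined by a clique edge created by the transform. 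Hence every edge of $G \cdot e$ appears in $H$; the only edges $H$ may have in addition are the clique edges among the vertices of $N_G(u) \setminus \{v\}$, which confirms the containment.

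The main obstacle is the bookkeeping in this last step: one must track precisely how the neighborhood of the contracted vertex arises from $N_G(u) \cup N_G(v)$ and confirm that each of these adjacencies is reproduced in $H$, either as a surviving edge or as a newly added clique edge. As a fallback not relying on Theorem~\ref{thm:starclique}, one could argue directly: starting from an optimal ranking $\rho$ of $G$ in which $u$ is the unique vertex of rank $1$, assign the contracted vertex the color $\rho(v)$ and retain $\rho$ elsewhere, then verify feasibility by lifting any path through the contracted vertex back to a path in $G$ passing through $u$ and/or $v$; since color $1$ then disappears entirely, the resulting ranking uses only $\td(G)-1$ colors.
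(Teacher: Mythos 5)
Your proposal is correct and matches the paper's proof essentially exactly: both compare $G \cdot e$ with the star-clique transform $H$ at an endpoint $u$, observe that $G \cdot e$ is a subgraph of $H$ (your edge-by-edge verification of this containment is sound), and invoke Theorem~\ref{thm:starclique}; the paper merely phrases the argument directly ($\td(G)=\td(G\cdot e)\leq \td(H)$, so $u$ is not $1$-unique) rather than as a contrapositive. No gaps.
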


\begin{proof} Let $e=uv$ and let $H$ be the graph obtained from a star-clique transformation at $u$ in $G$.  Note that $G\cdot e$ is isomorphic to the graph obtained from $G$ by deleting $u$ and adding edges from $v$ to $N_G(u)$.  Thus $G\cdot e$ is a subgraph of $H$ and $\td(G)=\td(G\cdot e)\leq \td(H)$.  By Theorem \ref{thm:starclique}, $u$ is not $1$-unique.  Similarly $v$ is not $1$-unique.
\end{proof}

To illustrate the utility of Theorems~\ref{thm:starclique}, \ref{thm:newbound}, and~\ref{thm:endpointsnotspecial}, we provide two examples. Theorem~\ref{thm:newbound} will later be important in the proof of Theorems~\ref{thm:s+r} and~\ref{thm:specialconstruction}.

\begin{example}\label{ex:1uniquesubgraphminimal}  \rm  Recall the graph $G_k$ that was defined in the previous section as cycle on $2^k+1$ vertices with a single triangular chord.   To establish the tree-depth of $G_k$ we use the following facts.
\begin{itemize}
\item (Katchalski et al.~\cite{path})  $\td(P_n)=\lfloor \log_2 n\rfloor +1$, for $n\geq 1$.
\item (Bruoth and Hor\u{n}\'ak~\cite{cycle})   $\td(C_n)=\lfloor \log_2 (n-1)\rfloor +2$, for $n\geq 3$.
\end{itemize}
By minor inclusion and the second fact, $\td(G_k)\geq \td(C_{2^k+1})=k+2$.  Let $v$ be a vertex of degree 3 in $G_k$.  Note that $G_k-v$ is a path on $2^k$ vertices.  By the first fact above there exists a ranking of $G_k-v$ using $k+1$ colors.  Using the same colors in $G_k$ and coloring vertex $v$ with $k+2$ shows that $\td(G_k)\leq k+2$.

Observe that a star-clique transform at a vertex of $G_k$ yields a graph $H$ isomorphic to either $C_{2^k}$ or $C_{2^k}$ with a chord.  Deleting a vertex of maximum degree in $H$ yields $P_{2^k-1}$.  From the first fact above, $\td(P_{2^k-1})=k$ and so as above we may add the deleted vertex back to create a ranking for $H$ using $k+1$ colors.  Since $\td(H)<\td(G_k)$ regardless of the vertex chosen, Theorem \ref{thm:starclique} implies that $G_k$ is $1$-unique. \hfill $ \Box$
\end{example}

\begin{example} \label{ex:subgraphNot1unique}\rm From the facts cited in Example \ref{ex:1uniquesubgraphminimal}, $\td(C_{2^k+2})=k+2$ and $\td(P_{2^k+2})=k+1$.  Thus as stated in the previous section $C_{2^k+2}\in \obs\sub(\mathcal{G}_{k+1})$.  Note that contracting an edge of $C_{2^k+2}$ yields $C_{2^k+1}$ and $\td(C_{2^k+2})=\td(C_{2^k+1})$.  Thus by Theorem \ref{thm:endpointsnotspecial}, $C_{2^k+2}$ is not $1$-unique. \hfill $ \Box$
\end{example}

As mentioned in the previous section and illustrated in Examples~\ref{ex:1uniquesubgraphminimal} and~\ref{ex:subgraphNot1unique}, the classes $\obs_{\subseteq} \mathcal{G}_k$ and $\mathcal{U}_{k+1}$ are incomparable under the subset relation. We now prove Theorem \ref{thm:conjecture}, which deals with the intersection of these classes.

{\it Proof of Theorem \ref{thm:conjecture}}.  If $G\in \mathcal{U}_{k+1}$, then by Theorem \ref{thm:endpointsnotspecial} contracting any edge of $G$ decreases the tree-depth.  If additionally $G\in \obs\sub(\mathcal{G}_k)$, then deleting any edge of $G$ decreases the tree-depth.  Thus $G$ is minor-minimal with tree-depth $k+1$. \hfill $\Box$.

Theorem \ref{thm:conjecture} shows that $1$-unique graphs that are also subgraph-minimal for their tree-depth are in fact minor-minimal.  Thus $1$-unique graphs differ from critical graphs by at most some additional edges.

\begin{observation}\label{obs:stillspecial} Let $G$ be $1$-unique.  If $e\in E(G)$ and $\td(G-e)=\td(G)$, then $G-e$ is $1$-unique.
\end{observation}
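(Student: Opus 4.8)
The plan is to exploit the one-directional nature of edge deletion: removing an edge can only \emph{remove} paths from the graph, never create new ones, so every ranking of $G$ survives as a ranking of $G-e$. Set $k = \td(G) = \td(G-e)$. The first step is to observe that, since $E(G-e) \subseteq E(G)$ and the two graphs share the same vertex set, every path in $G-e$ is also a path in $G$. Consequently the defining condition of a ranking---that every path joining two equally-labeled vertices pass through a vertex of strictly higher label---quantifies over a \emph{smaller} collection of paths in $G-e$ than in $G$, and so is only easier to satisfy. Therefore any $k$-ranking of $G$, regarded as a labeling of the identical vertex set of $G-e$, is automatically a $k$-ranking of $G-e$.

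Next I would record that any such inherited labeling is in fact \emph{optimal} for $G-e$. An optimal ranking of $G$ is by definition a $\td(G)$-ranking, hence uses only labels from $\{1,\dots,k\}$; viewed as a ranking of $G-e$ it is thus a $k$-ranking, and since $\td(G-e) = k$ this makes it an optimal ranking of $G-e$. This is the one place where the hypothesis $\td(G-e) = \td(G)$ is genuinely used: without it, the inherited $k$-ranking might overshoot the tree-depth of $G-e$ and fail to be optimal.

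Finally I would invoke the $1$-uniqueness of $G$ vertex by vertex. Because $V(G-e) = V(G)$, it suffices to show that each vertex $v$ is the unique rank-$1$ vertex in some optimal ranking of $G-e$. Given $v$, the hypothesis that $G$ is $1$-unique supplies an optimal ranking $\rho_v$ of $G$ in which $v$ is the only vertex of rank $1$; by the two steps above, $\rho_v$ is also an optimal ranking of $G-e$, and it still singles out $v$ as its unique rank-$1$ vertex. Hence $G-e$ is $1$-unique. I do not anticipate any serious obstacle here: the entire content lies in getting the direction of the path containment right (feasibility is preserved, not destroyed, when passing from $G$ to $G-e$), and the rest is bookkeeping about which labelings remain optimal.
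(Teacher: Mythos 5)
Your proposal is correct and takes essentially the same route as the paper, whose one-line proof observes that any ranking of $G$ with a single vertex of rank $1$ is also a ranking of $G-e$; you have simply spelled out the path-containment argument and the role of the hypothesis $\td(G-e)=\td(G)$ in preserving optimality. No gaps.
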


\begin{proof} Any ranking of $G$ in which a single vertex has rank 1 is also a ranking for $G-e$.
\end{proof}

\begin{theorem}\label{thm:spanningsubgraph} Every $1$-unique graph with tree-depth $k$ has a $k$-critical spanning subgraph.
\end{theorem}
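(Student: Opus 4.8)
The plan is to prove Theorem~\ref{thm:spanningsubgraph} by starting with a $1$-unique graph $G$ of tree-depth $k$ and repeatedly deleting edges as long as doing so preserves both the tree-depth and $1$-uniqueness, arriving at an edge-minimal $1$-unique graph $G'$ with $\td(G')=k$; I then claim this $G'$ is $k$-critical. Observation~\ref{obs:stillspecial} is precisely the engine that makes this process possible: it guarantees that whenever we delete an edge $e$ with $\td(G-e)=\td(G)$, the resulting graph $G-e$ is again $1$-unique. Since $G$ has finitely many edges, this deletion process terminates in a spanning subgraph $G'$ (we never delete vertices) that is $1$-unique, has tree-depth exactly $k$, and has the property that deleting \emph{any} remaining edge strictly decreases the tree-depth.

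The key observation is that this edge-minimal graph $G'$ is exactly subgraph-critical with tree-depth $k$: by construction, for every $e \in E(G')$ we have $\td(G'-e) < \td(G')=k$, so every proper spanning subgraph (and hence every proper subgraph) of $G'$ has smaller tree-depth. In the notation of the paper, $G' \in \obs\sub(\mathcal{G}_{k-1})$. Moreover $G'$ remains $1$-unique, so $G' \in \mathcal{U}_k \cap \obs\sub(\mathcal{G}_{k-1})$. Now I would invoke Theorem~\ref{thm:conjecture} directly: it states that $\mathcal{U}_{k}\cap \obs\sub(\mathcal{G}_{k-1})\subseteq \obs\minor(\mathcal{G}_{k-1})$, which means $G'$ is $k$-critical. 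Since $G'$ is a spanning subgraph of the original graph $G$, this establishes the theorem.

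The main subtlety to get right is the bookkeeping of tree-depth values versus the index shift in the $\obs$ notation: a graph with tree-depth $k$ lies in $\obs\sub(\mathcal{G}_{k-1})$ and $\obs\minor(\mathcal{G}_{k-1})$, so Theorem~\ref{thm:conjecture} must be applied with its $k+1$ instantiated as our $k$. I would state the termination argument carefully, noting that each edge deletion strictly reduces $|E|$ while the predicate ``$1$-unique and tree-depth $k$'' is maintained, so a minimal element exists. The genuinely interesting step—that $1$-uniqueness together with subgraph-minimality forces minor-minimality—is not something I need to reprove here, since it is exactly the content of Theorem~\ref{thm:conjecture}; the work of this proof is simply to manufacture, from an arbitrary $1$-unique graph, a spanning subgraph landing in the intersection on the left-hand side of~\eqref{eq:conjecture}. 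Thus the proof is short, and its only real obstacle is ensuring the edge-deletion process genuinely lands us in $\obs\sub$ rather than merely in some smaller $1$-unique graph.
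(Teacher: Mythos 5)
Your proposal is correct and takes essentially the same route as the paper's proof: iteratively delete edges whose removal preserves tree-depth, invoke Observation~\ref{obs:stillspecial} to keep $1$-uniqueness, observe that the resulting edge-minimal spanning subgraph is subgraph-critical, and apply Theorem~\ref{thm:conjecture} to conclude it is $k$-critical. Your added care about termination and the index shift in the $\obs\sub(\mathcal{G}_{k-1})$ notation is fine but does not change the argument.
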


\begin{proof} Let $G$ be a $1$-unique graph with tree-depth $k$. Iteratively delete edges whose removal does not decrease the tree-depth until this is no longer possible.  Let $H$ be the resulting graph.  By Observation \ref{obs:stillspecial}, $H$ is $1$-unique.   By Theorem \ref{thm:conjecture}, $H$ is $k$-critical.  Since $H$ has the same vertex set as $G$, it is a spanning subgraph.
\end{proof}

Note that in some sense, Theorem \ref{thm:spanningsubgraph} states that the converse of Conjecture \ref{conj: 1-unique} is almost true, further suggesting strong ties between 1-uniqueness and criticality.

\section{A construction for critical graphs} \label{sec:families}

The edge-addition result in Theorem~\ref{thm: DGT edge} allows us to construct critical graphs with arbitrarily large tree-depth. In this section we extend Theorem~\ref{thm: DGT edge} by using the property of $1$-uniqueness. We then show that Conjectures~\ref{conj: critical graph orders} and~\ref{conj: 1-unique} hold for all graphs generated by our construction.

The construction is as follows: henceforth let $H$ be an $s$-critical graph with vertices $v_1, \ldots, v_q$, and let $L_1,\ldots L_q$ be $(r+1)$-critical graphs.  Form a graph $G$ by choosing a vertex $w_i$ from each $L_i$ and identifying $v_i$ and $w_i$ for all $i\in \{1,\ldots,q\}$. (We say that the graphs $L_i$ are \emph{adjoined} at the vertices $v_i$ of $H$.) In the following results we show that $G$ has the properties we desire.

\begin{theorem}\label{thm:s+r}
The graph $G$ satisfies $\td(G)=r+s$.
\end{theorem}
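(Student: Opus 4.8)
The plan is to establish the two inequalities $\td(G)\le r+s$ and $\td(G)\ge r+s$ separately. The upper bound will follow almost immediately from Theorem~\ref{thm:newbound}, whereas the lower bound requires the one genuinely new idea: extracting from an arbitrary ranking of $G$ a ranking of $H$ whose colors all lie strictly above $r$.

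For the upper bound I would apply Theorem~\ref{thm:newbound} with the separator $S=V(H)=\{v_1,\dots,v_q\}$. Deleting these vertices leaves $G-S=(L_1-w_1)+\cdots+(L_q-w_q)$, and since each $L_i$ is $(r+1)$-critical, deleting the vertex $w_i$ (a minor operation) forces $\td(L_i-w_i)\le r$; hence $\td(G-S)=\max_i\td(L_i-w_i)\le r$. It remains to identify $G\langle S\rangle$. The only edges of $G$ among $v_1,\dots,v_q$ are those of $H$, and every component of $G-S$ lies inside a single $L_i-w_i$ and is therefore adjacent in $G$ to at most the one vertex $v_i$ of $S$; thus no component of $G-S$ creates a new edge between two vertices of $S$, and $G\langle S\rangle=H$. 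Theorem~\ref{thm:newbound} then yields $\td(G)\le\td(H)+r=s+r$.

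The lower bound is the crux. Let $\tau$ be any ranking of $G$ and set $N=\max_x\tau(x)$; I will show $N\ge r+s$. Each $L_i$ is connected (being critical), and the subgraph of $G$ induced on $V(L_i)$ is exactly $L_i$, so the restriction of $\tau$ to $L_i$ is a ranking of $L_i$ and uses at least $\td(L_i)=r+1$ distinct colors. Let $m_i$ be a vertex of $L_i$ of largest $\tau$-color and put $\mu_i=\tau(m_i)$, so that $\mu_i\ge r+1$; define $\sigma(v_i)=\mu_i$ for each $i$. The key claim is that $\sigma$ is a ranking of $H$. To verify this, suppose $\sigma(v_i)=\sigma(v_j)=\mu$ and let $Q=\langle v_i=v_{k_0},v_{k_1},\dots,v_{k_\ell}=v_j\rangle$ be a path in $H$. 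Concatenating a path in $L_i$ from $m_i$ to $v_i$, the path $Q$, and a path in $L_j$ from $v_j$ to $m_j$ produces a walk in $G$ from $m_i$ to $m_j$, which contains an $m_im_j$-path $P'$. Since $\tau(m_i)=\tau(m_j)=\mu$, some interior vertex $z$ of $P'$ has $\tau(z)>\mu$. But every vertex of $L_i$ and of $L_j$ has $\tau$-color at most $\mu$, so $z$ must be one of the $H$-vertices $v_{k_t}$; and $z\neq v_i,v_j$ because $\tau(v_i)\le\mu_i=\mu$ and $\tau(v_j)\le\mu_j=\mu$. Hence $z=v_{k_t}$ for an interior index $t$, and $\sigma(v_{k_t})=\mu_{k_t}\ge\tau(z)>\mu$, as required (the case $\ell=1$ shows $\sigma$ is a proper coloring). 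Consequently $\sigma$ uses at least $\td(H)=s$ distinct values, all of which lie in $\{r+1,r+2,\dots,N\}$; that interval therefore contains at least $s$ integers, so $N-r\ge s$. Combining the two bounds gives $\td(G)=r+s$.

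I expect the verification in the previous paragraph — that the coloring $\sigma(v_i)=\mu_i$ is a genuine ranking of $H$, and not merely a proper coloring — to be the main obstacle, since it is precisely where the cut-vertex structure of the construction and the maximality of each $\mu_i$ must be combined through the walk-to-path argument. It is worth noting that the lower bound uses the hypotheses only lightly, requiring just $\td(L_i)=r+1$ with each $L_i$ connected, together with $\td(H)=s$; the full strength of criticality enters only in the upper bound, through the fact that deleting a vertex from the critical graph $L_i$ strictly decreases its tree-depth.
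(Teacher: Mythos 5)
Your proof is correct, and while your upper bound is exactly the paper's argument (apply Theorem~\ref{thm:newbound} with $S=V(H)$, observe that $G\langle S\rangle=H$ because each component of $G-S$ attaches to only one $v_i$, and use $\td(L_i-w_i)\le r$), your lower bound takes a genuinely different route. The paper proceeds by induction on $s$: in an optimal ranking of $G$ the unique top-ranked vertex lies in some $L_i$; since $H$ is $s$-critical, $H-v_i$ has an $(s-1)$-critical minor $M$, and performing the same minor operations on $G-V(L_i)$ yields a graph of the same adjoined form, which has tree-depth at least $r+s-1$ by the induction hypothesis; restricting the ranking to $G-V(L_i)$ loses the top color, forcing $\td(G)\ge r+s$. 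You instead project an arbitrary ranking $\tau$ of $G$ onto $H$ by giving $v_i$ the maximum color $\mu_i$ appearing on $L_i$, and verify feasibility of this labeling $\sigma$ via the walk-to-path argument; since each $\mu_i\ge r+1$ and $\sigma$ must use at least $s$ distinct values, the top color of $\tau$ is at least $r+s$. Your verification of $\sigma$ is sound (note $m_i\neq m_j$ since the $L_i$ are vertex-disjoint, and any higher-colored vertex on $P'$ must be an interior $v_{k_t}$, whence $\sigma(v_{k_t})=\mu_{k_t}\ge\tau(v_{k_t})>\mu$). What each approach buys: the paper's induction is short given its framework but genuinely consumes the $s$-criticality of $H$, while your argument is non-inductive and, as you observe, strictly more general --- it needs only $\td(H)\ge s$, $\td(L_i)\ge r+1$, and connectedness of each $L_i$. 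Two small points you should make explicit: connectedness of critical graphs (a disconnected graph has a proper minor of equal tree-depth, obtained by deleting a component not uniquely attaining the maximum), which both you and the paper tacitly use, and the standard fact that monotonically compressing the distinct values of a feasible labeling preserves feasibility, which justifies the step from ``$\sigma$ uses at least $s$ values in $\{r+1,\dots,N\}$'' to $N\ge r+s$.
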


\begin{proof} By Theorem~\ref{thm:newbound},
\begin{multline*}
\td(G) \leq \td\left(G\langle V(H) \rangle\right) + \td\left(G-V(H)\right)\\
 = \td(H) + \td\left((L_1-w_1)+\dots+(L_q-w_q)\right) = r+s.
\end{multline*}

We prove $\td(G)\geq r+s$ by induction on $s$.  When $s=1$, we have $G=L_1$ and $\td(G)=r+1$.  If $s>1$, consider an optimal ranking for $G$ and suppose the vertex with highest rank is in $L_i$.  Since $H$ is $s$-critical, $H-v_i$ has a $(s-1)$-critical minor $M$.  Consider the sequence of contractions and deletions that take $H$ to $M$.  These same operations performed on $G-V(L_i)$ produce a graph that has as a minor $M$ with one of $L_1,\ldots, L_{i-1}, L_{i+1}, \ldots, L_q$ adjoined at each vertex as before. By the induction hypothesis, this proper minor of $G$ has tree-depth at least $r+s-1$.  Thus $\td(G)\geq r+s$.
\end{proof}

\begin{theorem}\label{thm:specialconstruction} If the graphs $H$ and $L_i$ for $i=\{1,\ldots, q\}$ are 1-unique, then $G$ is 1-unique and $(r+s)$-critical.  Furthermore, if $|V(H)|\leq 2^{\td(H)-1}$ and $|V(L_i)|\leq 2^{\td(L_i)-1}$ for each $i$, then $|V(G)|\leq 2^{\td(G)-1}$.
\end{theorem}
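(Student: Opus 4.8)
The plan is to treat the three assertions in turn, relying throughout on the cut-vertex structure of $G$: each $L_i$ meets the rest of $G$ only in $v_i=w_i$, so $v_i$ separates $L_i-w_i$ from the remainder of $G$. I will freely use that $\td(G)=r+s$ (Theorem~\ref{thm:s+r}) and that, since each $L_i$ and $H$ is critical, every proper minor of $L_i$ (in particular $L_i-w_i$ and $L_i-e$) has tree-depth at most $r$, while $\td(H-e)\le s-1$. The order bound is immediate: each identification merges two vertices into one, so $|V(G)|=|V(H)|+\sum_{i=1}^{q}(|V(L_i)|-1)$; since $|V(L_i)|\le 2^{r}$ and $q=|V(H)|\le 2^{s-1}$, this gives $|V(G)|\le |V(H)|\cdot 2^{r}\le 2^{s-1}\cdot 2^{r}=2^{\td(G)-1}$.

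To prove $1$-uniqueness, fix a vertex $x$ and choose $i$ with $x\in V(L_i)$ (allowing $x=w_i$). I would assemble an optimal ranking $\rho$ of $G$ placing rank $1$ only at $x$ as follows. Using that $L_i$ is $1$-unique, pick a ranking $\tau$ of $L_i$ with labels in $\{1,\dots,r+1\}$ in which $x$ is the unique rank-$1$ vertex, and let $\rho=\tau$ on $L_i$; using that $H$ is $1$-unique, pick a ranking $\sigma$ of $H$ in which $v_i$ is the unique rank-$1$ vertex and set $\rho(u)=\sigma(u)+r\in\{r+2,\dots,r+s\}$ for $u\in V(H)\setminus\{v_i\}$; finally rank each remaining $L_j-w_j$ with labels in $\{2,\dots,r+1\}$, by shifting an optimal $r$-ranking up by one. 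Then rank $1$ occurs only at $x$ and at most $r+s$ labels are used, so once $\rho$ is shown to be feasible it is optimal and witnesses $1$-uniqueness of $x$. The feasibility check is the heart of the argument: $\rho$ restricted to $H$ is order-isomorphic to $\sigma$—the label of $v_i$ is at most $r+1$, so $v_i$ stays the strict minimum of $H$—and is therefore a ranking; each branch $L_j$ (together with $v_j$) is ranked feasibly; and since the $v_j$ are cut vertices, any path between two equally labeled vertices either remains inside a single branch or inside $H$, where feasibility already holds, or else crosses between two branches and so passes through some $v_j$ with $j\ne i$, whose label is at least $r+2$ and exceeds both endpoints.

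For criticality I would verify that $G$ is subgraph-critical and then apply Theorem~\ref{thm:conjecture} to the $1$-unique graph $G$. Deleting a vertex lowers the tree-depth because restricting a ranking that witnesses its $1$-uniqueness discards rank $1$ and so uses at most $r+s-1$ labels. For an edge $e\in E(H)$, the choice $T=V(H)$ in Theorem~\ref{thm:newbound} gives $\td(G-e)\le \td(H-e)+r\le (s-1)+r$. For an edge $e\in E(L_i)$ I would repeat the layered construction with one fewer label: rank $L_i-e$, which now has tree-depth at most $r$, using labels $\{1,\dots,r\}$ (including $v_i$), shift $V(H)\setminus\{v_i\}$ up by only $r-1$ into $\{r+1,\dots,r+s-1\}$, and rank each other $L_j-w_j$ with labels $\{1,\dots,r\}$; the same separation argument shows this is a feasible ranking with at most $r+s-1$ labels. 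Hence every proper subgraph has smaller tree-depth, so $G\in\obs_{\subseteq}(\G_{r+s-1})$, and Theorem~\ref{thm:conjecture} gives that $G$ is $(r+s)$-critical.

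The step I expect to be the main obstacle is precisely the feasibility verification of these interleaved rankings. The naive decomposition $T=V(H)$ in Theorem~\ref{thm:newbound} only bounds the tree-depth by $r+s$ and never delivers the strict drop needed for either $1$-uniqueness or edge deletion, so one is forced to build rankings in which the attachment vertex $v_i$ of the distinguished branch is assigned a \emph{low} label inherited from that branch while the other vertices of $H$ occupy the top labels. Making this consistent hinges on $H$'s $1$-uniqueness, which keeps $v_i$ the strict minimum of $H$ so that lowering its label preserves feasibility on $H$, and on the pendant structure forcing every cross-branch path through a high-labeled cut vertex $v_j$.
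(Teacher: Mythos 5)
Your proposal is correct, but it reaches the conclusion by a noticeably different route than the paper, so a comparison is worthwhile. Both arguments share the same skeleton: place the labels of $V(H)\setminus\{v_i\}$ above everything else, with the distinguished branch $L_i$ carrying the attachment vertex $v_i$ at a low label. The paper, however, never builds that interleaved ranking explicitly; for $1$-uniqueness it applies Theorem~\ref{thm:starclique}, bounding the tree-depth of the star-clique transform $G'$ via Theorem~\ref{thm:newbound} with $S=V(H)-\{v_j\}$ (using $1$-uniqueness of $H$ and $L_j$ to get $\td(H')\le s-1$ and $\td(L'_j)\le r$), and for criticality it checks edge deletion \emph{and} contraction directly, again through Theorem~\ref{thm:newbound}. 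You instead construct the witnessing rankings by hand and verify feasibility through the cut-vertex structure --- in effect re-deriving, in this special setting, the direction of Theorem~\ref{thm:newbound} that manufactures a ranking from rankings of $G\langle T\rangle$ and $G-T$, with the extra observation (correctly justified by uniqueness of rank $1$ in $\sigma$) that lowering $v_i$'s label keeps $\rho$ order-isomorphic to $\sigma$ on $H$. Your treatment of contraction is the most genuinely different step: rather than bounding $\td(G\cdot e)$ directly, you establish subgraph-criticality (vertex deletion via restriction of a $1$-uniqueness ranking, edge deletion via $T=V(H)$ or a modified layered ranking) and then invoke Theorem~\ref{thm:conjecture}, which encapsulates Theorem~\ref{thm:endpointsnotspecial}. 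This is legitimate and arguably more economical, since it shows the contraction case comes for free once $1$-uniqueness and subgraph-criticality are known; the price is a longer bare-hands case analysis of paths (same-branch, cross-branch, within $H$) that the paper's machinery absorbs. The order bound is the same counting argument in both, and your sharper count $|V(G)|=|V(H)|+\sum_i(|V(L_i)|-1)$ still lands at $2^{s-1}\cdot 2^{r}=2^{\td(G)-1}$.
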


\begin{proof} Assume that the graphs $L_i$ for $i=\{1,\ldots, q\}$ are 1-unique. We know from the previous theorem that $\td(G)=r+s$.  Pick an arbitrary vertex $u$ and suppose that $u\in V(L_j)$. Let $G'$ and $L'_j$ be the graphs resulting from a star-clique transform at $u$ in $G$ and in $L_j$, respectively, and let $H'$ be the graph resulting from a star-clique transform at $v_j$ in $H$. By Theorem~\ref{thm:newbound},
\begin{multline*}
\td(G') \leq \td\left(G'\langle V(H)-\{v_j\} \rangle\right) + \td\left(G'-\left(V(H)-\{v_j\}\right)\right)\\
 = \td(H') + \td\left(L'_j+(L_1-w_1)+\dots+(L_q-w_q)\right) = r+s-1,
\end{multline*}
where we omit $(L_j-w_j)$ in the disjoint union $(L_1-w_1)+\dots+(L_q-w_q)$ on the second line. By Theorem~\ref{thm:starclique}, $G$ is $1$-unique.

To show that $G$ is $(r+s)$-critical it is sufficient to consider contracting or deleting a single edge $e$. Let $G'$ be the resulting graph.

If $e$ is an edge of $H$, then the vertices of $G'$ corresponding to $H$ induce a subgraph with tree-depth $s-1$. By Theorem~\ref{thm:newbound},
\begin{multline*}
\td(G') \leq \td\left(G'\langle V(H) \rangle\right) + \td\left(G'-V(H)\right)\\
= \td(H)-1 + \td\left((L_1-w_1)+\dots+(L_q-w_q)\right) = r+s-1.
\end{multline*}
If $e$ is an edge of some $L_j$, then the vertices of $G'$ corresponding to $H-v_j$ induce a subgraph with tree-depth $s-1$, and the vertices of $G'$  corresponding to $L_j-e$ induce a subgraph with tree-depth $r$, so by Theorem~\ref{thm:newbound},
\begin{multline*}
\td(G') \leq \td\left(G'\langle V(H)-\{v_j\} \rangle\right) + \td\left(G'-(V(H)-\{v_j\})\right)\\
= \td(H)-1 + r = r+s-1.
\end{multline*}
Thus $G$ is $(r+s)$-critical.

Suppose $|V(H)|\leq 2^{\td(H)-1}=2^{s-1}$ and $|V(L_i)|\leq 2^{\td(L_i)-1}=2^{r}$ for each $i$.  Since  every vertex of $G$ belongs to some $L_i$ with $1\leq i\leq |V(H)|$ we have $|V(G)|\leq |V(H)|\max_i |V(L_i)| \leq 2^{s-1}2^{r} = 2^{\td(G)-1}$.
\end{proof}

\begin{example}\label{ex:contruction}  \rm The graph in Figure \ref{fig:construction} is constructed from a $4$-critical graph (shown with shaded vertices) by adjoining copies of the 3-critical graphs $P_4$ and $K_3$.  By Theorem \ref{thm:specialconstruction}, the graph is $6$-critical.%
\begin{figure}[htb]
       \center{\includegraphics[scale=0.3]{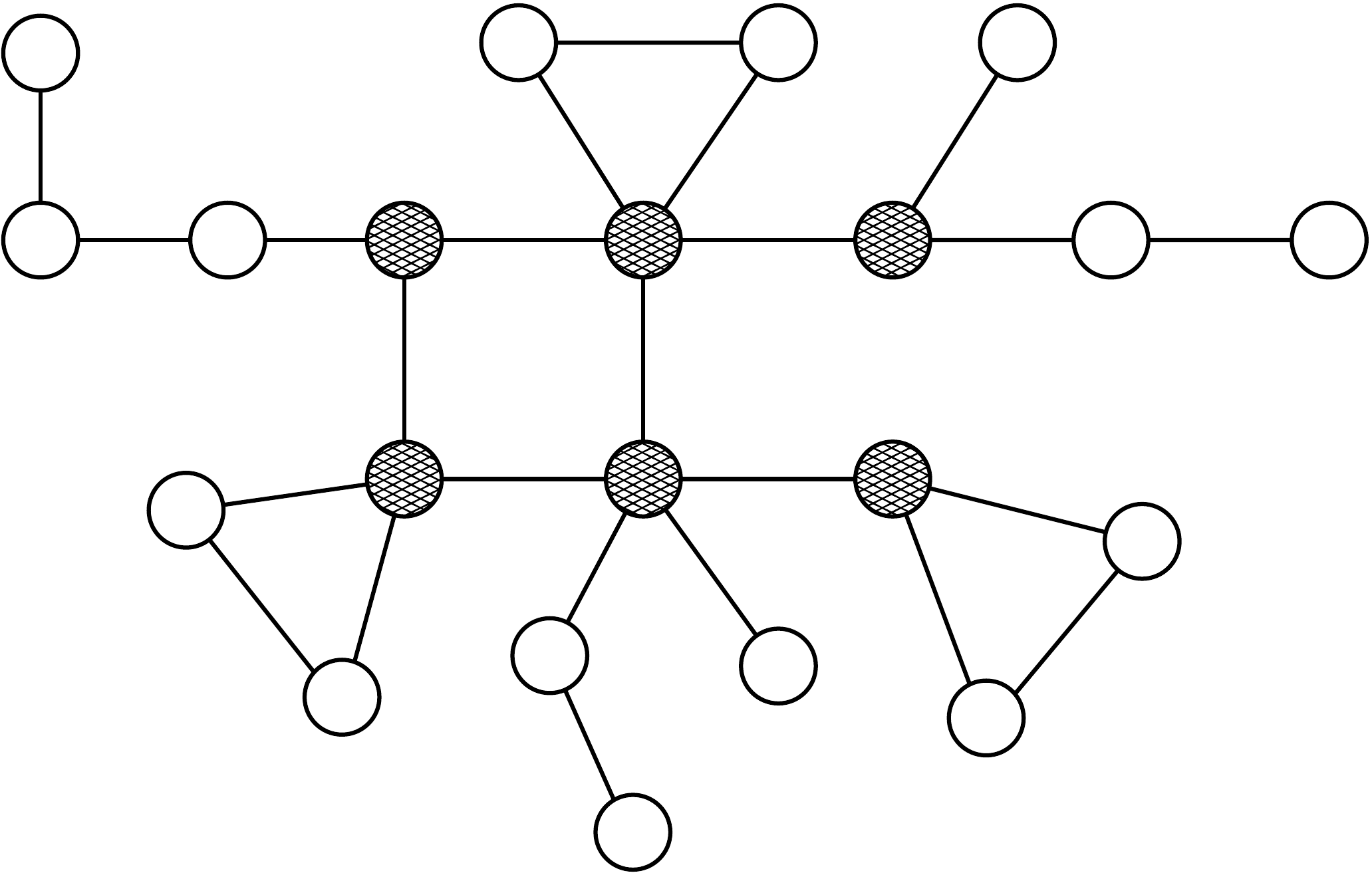}}
              \caption{A graph in $\mathcal{S}_6$}\label{fig:construction}
\end{figure}
\phantom{.}\hfill $ \Box$
\end{example}

Using our construction, we may inductively construct a large family of critical graphs with tree-depth $k$, given a family $\mathcal{F}$ of graphs already determined to be critical and 1-unique. Let $\mathcal{S}_2 = \{K_2\}$, and for $k > 2$ define $\mathcal{S}_k$ to be the family consisting of all the $k$-critical graphs in $\mathcal{F}$, together with all graphs $G$ that may be constructed as above with $H$ taken from $\mathcal{S}_s$ and the $L_i$'s taken from $\mathcal{S}_{r+1}$ such that $r\geq 1$, $s \geq 2$, and $r+s = k$.

By the results above, $\mathcal{S}_k$ is a family of $k$-critical graphs that are all 1-unique. Furthermore, if Conjecture~\ref{conj: critical graph orders} holds for all elements of $\mathcal{F}$, then by Theorem~\ref{thm:specialconstruction} it holds for all elements of $\mathcal{S}_k$. Clearly the size of the class $\mathcal{S}_k$ depends on the family $\mathcal{F}$, and it is an interesting task to find new infinite families of critical, 1-unique graphs that may be included in $\mathcal{F}$. Using the techniques similar to those in Examples~\ref{ex:1uniquesubgraphminimal} and~\ref{ex:subgraphNot1unique}, it is possible to show~\cite{unpub} that each of the following graphs is $k$-critical and 1-unique and satisfies Conjecture~\ref{conj: critical graph orders}:

\begin{itemize}
\item For each $k \geq 1$ and $s \in \{1,\dots,k\}$, a graph $Q$ obtained in the following way: Let $H_0$ be a complete graph with $s$ vertices, and for some $q \in \{1,\dots,s\}$, let $H_1, \dots, H_q$ be vertex-disjoint complete graphs, each with $k-s$ vertices. Given a partition $\pi_1+\dots+\pi_q$ of $s$ into positive integers, choose a partition $B_1,\dots,B_q$ of the vertices of $H_0$ so that $|B_i|=\pi_i$ for all $i$, and form $Q$ by adding to the disjoint union $H_0+H_1+\dots+H_q$ all possible edges between vertices in $B_i$ and vertices of $H_i$, for all $i$. (When $s=1$ or $s=k$, or when $q=1$, the graph $Q$ is isomorphic to $K_k$. Note that graph in Figure~\ref{fig:minorminimallist} with degree sequence $(3,3,3,2,1)$ also has this form.)

\item For each $k \geq 3$ and $t$ such that $0 \leq t \leq 2^{k-2}-2$, the graph $R_{k,t}$ obtained by taking a path with $2^{k-2}+1+t$ vertices and adding an edge between the two vertices at distance $t$ from the endpoints. (Note that $R_{k,0}=C_{2^{k-2}+1}$, and Figure~\ref{fig:minorminimallist} contains $R_{3,0}$, $R_{4,0}$, $R_{4,1}$, and $R_{4,2}$.)
\end{itemize}

An open question is whether it is possible to find a class $\mathcal{F}$ with a simple and nontrivial description such that every (1-unique) $k$-critical graph belongs to $\mathcal{S}_k$.

Alternatively, perhaps the construction may be generalized. Note that graphs generated by our construction are formed by ``overlapping'' smaller critical, 1-unique graphs (the graphs $L_i$) on vertices of a central graph that is also critical (the graph $H$). The graph $Q$ in the example above shares a similar property; it may be considered as the result of overlapping complete graphs $K_{|B_i|+k-s}$ on the complete graph $H_0$. 

However, if, in attempting to apply the construction, the vertices of a single $L_i$ are carelessly identified with more than one vertex of $H$, the graph $G\langle V(H) \rangle$ may no longer be critical or have other properties that allow us to ensure that $G$ is critical with a desired tree-depth. Still, motivated by the example $Q$ above, we may suspect that under the right conditions we can maintain criticality or 1-uniqueness while identifying multiple vertices in each $L_i$ with vertices in $H$. We leave it as an open question to determine these conditions.

\bibliographystyle{elsarticle-num}
\bibliography{treedepth}

\begin{thebibliography}{10}
\expandafter\ifx\csname url\endcsname\relax
  \def\url#1{\texttt{#1}}\fi
\expandafter\ifx\csname urlprefix\endcsname\relax\def\urlprefix{URL }\fi
\expandafter\ifx\csname href\endcsname\relax
  \def\href#1#2{#2} \def\path#1{#1}\fi

\bibitem{NesetrilOssonadeMendez12}
J.~Ne{\v{s}}et{\v{r}}il, P.~Ossona~de Mendez,
  \href{http://dx.doi.org/10.1007/978-3-642-27875-4}{Sparsity}, Vol.~28 of
  Algorithms and Combinatorics, Springer, Heidelberg, 2012, graphs, structures,
  and algorithms.
\newblock \href {http://dx.doi.org/10.1007/978-3-642-27875-4}
  {\path{doi:10.1007/978-3-642-27875-4}}.
\newline\urlprefix\url{http://dx.doi.org/10.1007/978-3-642-27875-4}

\bibitem{BarNoyEtAl12}
A.~Bar-Noy, P.~Cheilaris, M.~Lampis, V.~Mitsou, S.~Zachos,
  \href{http://dx.doi.org/10.1016/j.tcs.2012.04.036}{Ordered coloring of grids
  and related graphs}, Theoret. Comput. Sci. 444 (2012) 40--51.
\newblock \href {http://dx.doi.org/10.1016/j.tcs.2012.04.036}
  {\path{doi:10.1016/j.tcs.2012.04.036}}.
\newline\urlprefix\url{http://dx.doi.org/10.1016/j.tcs.2012.04.036}

\bibitem{path}
M.~Katchalski, W.~McCuaig, S.~Seager,
  \href{http://dx.doi.org/10.1016/0012-365X(93)E0216-Q}{Ordered colourings},
  Discrete Math. 142~(1-3) (1995) 141--154.
\newblock \href {http://dx.doi.org/10.1016/0012-365X(93)E0216-Q}
  {\path{doi:10.1016/0012-365X(93)E0216-Q}}.
\newline\urlprefix\url{http://dx.doi.org/10.1016/0012-365X(93)E0216-Q}

\bibitem{BodlaenderEtAl98}
H.~L. Bodlaender, J.~S. Deogun, K.~Jansen, T.~Kloks, D.~Kratsch, H.~M{\"u}ller,
  Z.~Tuza, \href{http://dx.doi.org/10.1137/S0895480195282550}{Rankings of
  graphs}, SIAM J. Discrete Math. 11~(1) (1998) 168--181 (electronic).
\newblock \href {http://dx.doi.org/10.1137/S0895480195282550}
  {\path{doi:10.1137/S0895480195282550}}.
\newline\urlprefix\url{http://dx.doi.org/10.1137/S0895480195282550}

\bibitem{ChangEtAl10}
C.-W. Chang, D.~Kuo, H.-C. Lin,
  \href{http://dx.doi.org/10.1016/j.ipl.2010.05.025}{Ranking numbers of
  graphs}, Inform. Process. Lett. 110~(16) (2010) 711--716.
\newblock \href {http://dx.doi.org/10.1016/j.ipl.2010.05.025}
  {\path{doi:10.1016/j.ipl.2010.05.025}}.
\newline\urlprefix\url{http://dx.doi.org/10.1016/j.ipl.2010.05.025}

\bibitem{IyerEtAl88}
A.~V. Iyer, H.~D. Ratliff, G.~Vijayan,
  \href{http://dx.doi.org/10.1016/0020-0190(88)90194-9}{Optimal node ranking of
  trees}, Inform. Process. Lett. 28~(5) (1988) 225--229.
\newblock \href {http://dx.doi.org/10.1016/0020-0190(88)90194-9}
  {\path{doi:10.1016/0020-0190(88)90194-9}}.
\newline\urlprefix\url{http://dx.doi.org/10.1016/0020-0190(88)90194-9}

\bibitem{NesetrilOssonadeMendez06}
J.~Ne{\v{s}}et{\v{r}}il, P.~Ossona~de Mendez,
  \href{http://dx.doi.org/10.1016/j.ejc.2005.01.010}{Tree-depth, subgraph
  coloring and homomorphism bounds}, European J. Combin. 27~(6) (2006)
  1022--1041.
\newblock \href {http://dx.doi.org/10.1016/j.ejc.2005.01.010}
  {\path{doi:10.1016/j.ejc.2005.01.010}}.
\newline\urlprefix\url{http://dx.doi.org/10.1016/j.ejc.2005.01.010}

\bibitem{NesetrilOssonadeMendez08}
J.~Ne{\v{s}}et{\v{r}}il, P.~Ossona~de Mendez,
  \href{http://dx.doi.org/10.1016/j.ejc.2006.07.013}{Grad and classes with
  bounded expansion. {I}. {D}ecompositions}, European J. Combin. 29~(3) (2008)
  760--776.
\newblock \href {http://dx.doi.org/10.1016/j.ejc.2006.07.013}
  {\path{doi:10.1016/j.ejc.2006.07.013}}.
\newline\urlprefix\url{http://dx.doi.org/10.1016/j.ejc.2006.07.013}

\bibitem{Pothen88}
A.~Pothen, The complexity of optimal elimination trees, Tech. Rep. CS-88-16,
  Department of Computer Science, Pennsylvania State University, State College,
  Pennsylvania (1988).

\bibitem{DGT}
Z.~Dvo{\v{r}}{\'a}k, A.~C. Giannopoulou, D.~M. Thilikos,
  \href{http://dx.doi.org/10.1016/j.ejc.2011.09.014}{Forbidden graphs for
  tree-depth}, European J. Combin. 33~(5) (2012) 969--979.
\newblock \href {http://dx.doi.org/10.1016/j.ejc.2011.09.014}
  {\path{doi:10.1016/j.ejc.2011.09.014}}.
\newline\urlprefix\url{http://dx.doi.org/10.1016/j.ejc.2011.09.014}

\bibitem{GT09}
A.~C. Giannopoulou, D.~M. Thilikos,
  \href{http://dx.doi.org/10.1016/j.endm.2009.07.041}{Obstructions for
  tree-depth}, in: European {C}onference on {C}ombinatorics, {G}raph {T}heory
  and {A}pplications ({E}uro{C}omb 2009), Vol.~34 of Electron. Notes Discrete
  Math., Elsevier Sci. B. V., Amsterdam, 2009, pp. 249--253.
\newblock \href {http://dx.doi.org/10.1016/j.endm.2009.07.041}
  {\path{doi:10.1016/j.endm.2009.07.041}}.
\newline\urlprefix\url{http://dx.doi.org/10.1016/j.endm.2009.07.041}

\bibitem{cycle}
E.~Bruoth, M.~Hor{\v{n}}{\'a}k,
  \href{http://dx.doi.org/10.7151/dmgt.1094}{On-line ranking number for cycles
  and paths}, Discuss. Math. Graph Theory 19~(2) (1999) 175--197, the Seventh
  Workshop ``3in1'' Graphs '98 (Krynica).
\newblock \href {http://dx.doi.org/10.7151/dmgt.1094}
  {\path{doi:10.7151/dmgt.1094}}.
\newline\urlprefix\url{http://dx.doi.org/10.7151/dmgt.1094}

\bibitem{unpub}
M.~D. Barrus, J.~Sinkovic,
  \href{http://math.uri.edu/\%7Ebarrus/papers/treedepthclasses.pdf}{Classes of
  critical graphs for tree-depth} (2015).
\newline\urlprefix\url{http://math.uri.edu/\%7Ebarrus/papers/treedepthclasses.pdf}

\end{thebibliography}

\end{document}